%
%
%

\documentclass[12pt]{amsart}

\pdfoutput=1


\usepackage{latexsym,amsmath,amssymb,amsmath,mathtools,tikz,microtype,comment}
\usepackage[english]{babel}
\usepackage[margin=2.5cm]{geometry}
\usepackage{enumitem}

\usepackage[colorlinks=true, citecolor=blue, urlcolor=magenta, breaklinks=true]{hyperref}


\hfuzz=5.002pt
\numberwithin{equation}{section}
\hyphenation{semi-stable}
\setlength{\parskip}{3pt}


\newtheorem{theorem}{Theorem}[section]
\newtheorem{lemma}[theorem]{Lemma}

\newtheorem{corollary}[theorem]{Corollary}

\newtheorem{question}[theorem]{Question}

\theoremstyle{definition}
\newtheorem{definition}[theorem]{Definition} 
\newtheorem{remark}[theorem]{Remark}
\newtheorem{example}[theorem]{Example}
\newtheorem{construction}[theorem]{Construction}


\DeclareMathOperator{\rank}{rank}
\DeclareMathOperator{\supp}{supp}
\DeclareMathOperator{\pdim}{pdim}
\DeclareMathOperator{\reg}{reg}

\newcommand{\K}{{\mathbb{K}}}
\newcommand{\N}{{\mathbb{N}}}
\newcommand{\R}{{\mathbb{R}}}
\newcommand{\Z}{{\mathbb{Z}}}
\newcommand{\Q}{{\mathbb{Q}}}

\begin{document}

 
\title{Splittings of Toric Ideals}
\thanks{\today}

\author[G. Favacchio]{Giuseppe Favacchio}
\author[J. Hofscheier]{Johannes Hofscheier}
\author[G. Keiper]{Graham Keiper}
\author[A. Van Tuyl]{Adam Van Tuyl}

\address[G. Favacchio]
	{ DISMA-Department of Mathematical Sciences, Politecnico di Torino, Turin, Italy}
	\email{giuseppe.favacchio@polito.it}

\address[J. Hofscheier]{School of Mathematical Sciences\\
University of Nottingham\\
Nottingham, NG7 2RD\\
UK}
\email{johannes.hofscheier@nottingham.ac.uk}

\address[G. Keiper, A. Van Tuyl]{Department of Mathematics and Statistics\\
McMaster University, Hamilton, ON, L8S 4L8}
\email{keipergt@mcmaster.ca, vantuyl@math.mcmaster.ca}

\keywords{Toric ideals, graphs, graded Betti numbers}
\subjclass[2000]{13D02, 13P10, 14M25, 05E40}
 
\begin{abstract}
  Let $I \subseteq R = \K[x_1,\ldots,x_n]$ be a toric ideal,
  i.e., a binomial prime ideal. We investigate when
  the ideal $I$ can be ``split" into the sum of two smaller
  toric ideals.  For a general toric ideal $I$,
  we give a sufficient condition for this splitting
  in terms of the integer matrix that defines $I$.
  When $I = I_G$ is the toric ideal of a finite simple graph $G$,
  we give additional splittings of $I_G$ related to subgraphs
  of $G$.  When there exists a splitting $I = I_1+I_2$ of the toric ideal,
  we show that in some cases we can describe
  the (multi-)graded Betti numbers of $I$ in terms
  of the (multi-)graded Betti numbers of $I_1$ and $I_2$.
\end{abstract}
 \maketitle


\section{Introduction}

Toric ideals appear in the intersection of many areas of mathematics,
including commutative algebra, algebraic geometry, combinatorics, and have
applications to many areas, e.g., algebraic statistics \cite{DS}.
A \emph{toric ideal} $I$ in a  polynomial ring $R = \K[x_1,\ldots,x_n]$
(with $\K$ an algebraically closed field of characteristic zero)
is a prime ideal generated by 
binomials.   For detailed introductions to toric ideals, we refer
the readers to \cite{CoxLittleSchecnk,EH,Binomi,St}.

Under some mild assumptions, a toric ideal $I \subseteq R$ is a
(multi-)homogeneous ideal, and consequently,
one can compute its {\em (multi-)graded Betti numbers}, that is,
\[\beta_{i,j}(I) = \dim_\K {\rm Tor}_i^R(I, \K)_j,\]
where $j \in \N$ or $j\in \N^n$, depending upon
our grading.  Betti numbers are examples of the
homological invariants
of $I$ that are encoded into the minimal (multi-)graded 
free resolution of $I$.
It was shown by
Campillo and Marijuan \cite{CM} and Campillo and Pison \cite{CP},
and independently by
Aramova and Herzog
\cite{AH},
that one can compute the 
multi-graded Betti numbers of any multi-homogeneous toric ideal by computing
the ranks of reduced simplicial homology groups
(see \cite[Theorem 67.5]{PeevaBook} and \cite[Theorem 12.12]{St}).
This result is a toric ideal analog of the well-known
Hochster's Formula (e.g., \cite[Theorem 3.31]{Binomi}) for monomial ideals.
Applying these formulas, however, to compute the
(multi-)graded Betti numbers can be a formidable task.

One current stream of research has been interested in these homological
invariants under the additional assumption that $I = I_G$ is the
\emph{toric ideal of finite simple graph $G$}.
Specifically, given a finite simple graph $G$ with vertex
set $V(G) = \{x_1,\ldots,x_n\}$ with edge set $E(G) = \{e_1,\ldots,e_q\}$,
the toric ideal $I_G$ is the kernel of the map
$\varphi\colon \K[e_1,\ldots,e_q] \rightarrow \K[x_1,\ldots,x_n]$
given by $\varphi(e_i) = x_{i,1}x_{i,2}$ where $e_i = \{x_{i,1},x_{i,2}\}$
(see Section \ref{s. background}).  One is then interested in
relating the homological invariants of $I_G$ to the graph theoretical
invariants of $G$.  As examples of this approach,
\cite{OH,TT,VRees} relate the
generators of $I_G$ to walks in $G$,
\cite{BOVT,CN,HBOK} give graph theoretical bounds
on the regularity and projective dimension for the toric
ideals of some families of graphs, \cite{GM} investigates the 
$N_p$-property of the toric ideals of bipartite graphs, \cite{HMT,T} 
studies when the toric ideal of a bipartite graph has a linear resolution,
\cite{GHKKVTP,NN} compute all the graded Betti
numbers of $I_G$ for specific families of graphs, and
\cite{GV,HHKOK} relate the invariants of depth and
multiplicity of $R/I_G$ to $G$.  

Given this interest in homological invariants, it is natural
to ask when one can compute the Betti numbers
of toric ideals using recursive or inductive methods.
With this goal in mind, we investigate when one can
``split" the
toric ideal into ``smaller" toric ideals. More 
precisely, we say a toric ideal $I$ has {\em toric splitting} (or $I$ is a {\em splittable toric ideal}) if there exists toric ideals $I_1$ and $I_2$ such that $I = I_1 + I_2$. Our main motivation is to identify 
toric splittings of $I$ so that the graded Betti 
numbers of $I$ can be computed in terms of those of $I_1$ and $I_2$,
thus complementing existing approaches to computing these invariants.
We were also
inspired by \cite{FHVT}
which considered splittings of
monomial ideals to compute (or bound)
the Betti numbers.

One immediately encounters the following obstacle:
Suppose the toric ideal $I = \langle f_1,\ldots,f_t \rangle$ is
minimally generated by the binomials $V = \{f_1,\ldots,f_t\}$. Given a
non-trivial partition of the generators, say 
$V = W \sqcup Y$, the ideals $I_1 = \langle g ~|~ g \in W \rangle$ and
$I_2 = \langle g ~|~ g \in Y \rangle$ are binomial
ideals, but these ideals may fail to be prime. Hence, toric splittings
may not even exist!

The main results of this paper were inspired by the following
prototypical example of a toric splitting.
Given a graph $G$ and cycle $C$ of even length $2d$,
consider the graph $H$ which is formed by identifying any edge of $G$
with an edge of $C$ (see Figure \ref{fig:onecycle}).
 \begin{figure}[!ht]
    \centering
    \begin{tikzpicture}[scale=0.325]
      \draw[dotted] (6,0) circle (3cm) node{$G$};
      \draw[thin,dashed] (4,1) -- (5,2);
      \draw[thin,dashed] (4,-1) -- (5,-2);
      \draw[dashed] (4,-1) -- (3.5,-2);
      \draw (4,1) -- (4,-1) node[midway, right] {$e$};
      \draw[dashed] (4,1) -- (3.5,2);

      \node at (0,0) {$C$};
      \draw[loosely dotted] (3.5,2) edge[bend right=10] (2,3.5);
      \draw[dashed] (1,4) -- (2,3.5);
      \draw (-1,4) -- (1,4);
      \draw[dashed] (-1,4) -- (-2,3.5);
      \draw[loosely dotted] (-2,3.5) edge[bend right=10] (-3.5,2);
      \draw[dashed] (-4,1) -- (-3.5,2);
      \draw (-4,1) -- (-4,-1);
      \draw[dashed] (-4,-1) -- (-3.5,-2);
      \draw[loosely dotted] (-2,-3.5) edge[bend right=-10] (-3.5,-2);
      \draw[dashed] (-1,-4) -- (-2,-3.5);
      \draw (-1,-4) -- (1,-4);
      \draw[dashed] (1,-4) -- (2,-3.5);
      \draw[loosely dotted] (3.5,-2) edge[bend right=-10] (2,-3.5);
    
      \fill[fill=white,draw=black] (-4,1) circle (.1);
      \fill[fill=white,draw=black] (-4,-1) circle (.1);
      \fill[fill=white,draw=black] (4,1) circle (.1);
      \fill[fill=white,draw=black] (4,-1) circle (.1);
      \fill[fill=white,draw=black] (-1,4) circle (.1);
      \fill[fill=white,draw=black] (1,4) circle (.1);
      \fill[fill=white,draw=black] (-1,-4) circle(.1);
      \fill[fill=white,draw=black] (1,-4) circle (.1);
    \end{tikzpicture}
    \caption{Connecting an even cycle  $C$ to a graph $G$ to make a graph $H$.}
    \label{fig:onecycle}
 \end{figure}
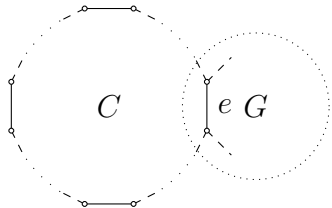
 In this case, the toric ideal of $H$ is splittable.  Specifically,
 $I_{H} = I_{G} + I_C$, and furthermore, the graded Betti numbers satisfy
  (Corollary \ref{c. firstglue})
 \[\beta_{i,j}(R/I_H) = \beta_{i,j}(R/I_G) + \beta_{i,j-d}(R/I_G) ~\mbox{for
   all $i,j \geq 0.$}\]
 We want to determine a  more general context where this
 example becomes a special case.

 In Section \ref{sec:split-tor-id} we considered toric ideals in general. 
 A toric ideal can be constructed from
 an $n \times s$ integer matrix $A$ (see Section \ref{s. background}).
 Our first main result (see Lemma \ref{lem:lattice-ideal}) gives a
 sufficient condition for a
 toric ideal $I$ to be a splittable toric ideal in terms of
 the matrix $A$.
 In fact, under the hypotheses of Lemma \ref{lem:lattice-ideal},
 one of the two ideals in the splitting will be a principal ideal.
 If the toric ideal is also multi-graded, we apply Lemma
 \ref{lem:lattice-ideal} to relate
 the graded Betti numbers of splittable ideal $I = I_1 +I_2$
 to those of $I_1$ and $I_2$.  When we specialize our Lemma
 \ref{lem:lattice-ideal} to the toric ideals of graphs, we recover
 the example described above.  Of independent interest, our Lemma \ref{lem:tech}
gives an ideal membership criterion for a particular binomial to belong
to a two-generated binomial ideal.

 In Section \ref{s. Splittings of Toric Ideals of Graphs} we restrict
 to splittings of toric ideals of graphs.  The results in this
 section are based upon the observation that the graph $H$ in
 Figure \ref{fig:onecycle} is formed by ``gluing'' an even cycle
 to an edge of a graph.  After formally defining ``gluing'' (and its
 inverse operation of ``splitting''), we generalize the above
 example by showing that if any bipartite graph $K$ is glued along
 an edge of a graph $G$ to form $H$, then
 $I_H = I_G + I_K$ is a splitting of toric ideals
 (see Corollary \ref{t. main splitting}).  Furthermore, Theorem
 \ref{t.tensor product} relates the graded Betti numbers of $I_G,I_K$,
 and $I_H$.   Our main result (Theorem \ref{thm:glue-Gs-along-path}) is more general in that we consider
 a gluing of $G$ and $K$ along a path; in this case $I_H$ is
 the sum of $I_G$ and $I_K$ up to a saturation by a monomial.

 Our paper is structured as follows. In
 Section  \ref{s. background} we recall the relevant definitions and results
 about toric ideals and graph theory. In Section \ref{sec:split-tor-id}  we 
 present our main technical lemma and consequences
 for the graded Betti numbers of toric ideals. 
 In Section \ref{s. Splittings of Toric Ideals of Graphs} we consider splittings of toric ideals
 of graphs and  the consequence of this splitting for the graded Betti numbers of such graphs. The last section suggests some future research directions.

{\bf Acknowledgements.} 
The authors thank Jason Brown and David Cox for
answering our questions. Our results were inspired by computer
calculations using {\em Macaualy2} \cite{M2}. Favacchio thanks
McMaster University for its hospitality and
the support of the Universit\`a degli Studi di
Catania ``Piano della Ricerca 2016/2018 Linea di intervento 2" and the
``National 
Group for Algebraic and Geometric Structures, and their Applications" (GNSAGA-INdAM). 
Hofscheier is supported by a Nottingham Research Fellowship from the University of Nottingham.
Van Tuyl acknowledges the support
of NSERC RGPIN-2019-05412.


\section{Notation and Background}
\label{s. background}

We recall the relevant definitions and background on toric ideals and toric ideals of graphs.

\subsection{Toric Ideals}
\label{subsec:tor-id}
Fix an integer $n \geq 1$, and let $\mathbf{e}_1,\ldots, \mathbf{e}_n$ denote the standard basis vectors
of $\R^n$ (or $\Z^n)$. The \emph{support} of a vector
$\alpha = (a_1,\ldots,a_n)$ in $\R^n$ (or $\Z^n$)
is\[ \supp(\alpha) \coloneqq \{ i=1, \ldots, n ~|~a_i \neq 0 \}\]  Any
$\alpha \in \R^n$ (or $\Z^n$) can be decomposed
uniquely as $\alpha = \alpha_+ - \alpha_-$ where
\[
  \alpha_+ = \sum_{a_i > 0} a_i{\bf e}_i \quad \mbox{and} \quad \alpha_- = \sum_{a_i < 0} (-a_i){\bf e}_i \text{.}
\]

Let $\{\alpha_1,\ldots,\alpha_s\} \subseteq \Z^n$, and set $A$ to be the $n \times s$ matrix $A = 
[ \alpha_1\; \cdots\; \alpha_s ]$. The matrix $A$ induces a map $\Z^s \to \Z^n$; in fact, we have an exact sequence
\[
  0 \to L \to \Z^s \to \Z^n \text{,}
\]
where $L$ is the kernel of $A$. Recall that $L$ is a \emph{lattice}, i.e., a finitely generated free abelian group. In particular, $L$ is isomorphic (as a $\Z$-module) to $\Z^t$ for some $t$. The notion of saturation is needed for the
proof of Theorem \ref{t. Splitting 1}. Let $M$ be a lattice and $L\subset M$ a sublattice. The lattice $L$ is \emph{saturated} in $M$ if for any $\ell \in M$ such that some positive integer multiple of $\ell$ is contained in $L$, then $\ell$ is already in $L$. Note that $L$ is saturated in $M$ if and only if $M/L$ is torsionfree.

\begin{definition}
Let $A = [ \alpha_1\; \cdots\; \alpha_s ]$ be an $n \times s$ matrix as above, and let $R = \K[x_1, \ldots, x_s]$. The {\em toric ideal} of $A$ is the ideal
\[
  I_{A} = \langle x^{\alpha_{+}}-x^{\alpha_{-}}~|~\alpha\in\text{ker}(A) \rangle \subseteq R \text{.}
\]
\end{definition}

\begin{remark}
  A toric ideal is sometimes defined as a binomial ideal (an ideal generated by binomials, that is, the difference of two terms) that is a prime ideal. It is clear from our definition that $I_A$ is a binomial ideal. To see that $I_A$ is a prime
  ideal, consider the polynomial ring $R = \K[x_1, \ldots, x_s]$ and the Laurent polynomial ring $S= \K[t_1, t_1^{-1}, \ldots, t_n, t_n^{-1}]$. Define a homomorphism of semigroups algebras $\varphi \colon R \to S$ by mapping 
  \[
    x_i \mapsto t^{\alpha_i} = t_1^{a_{i,1}}t_2^{a_{i,2}}\cdots t_n^{a_{i,n}} ~~\mbox{where $\alpha_i = (a_{i,1},\ldots,a_{i,n})$.}
  \]
  Then an equivalent definition (see \cite[Chapter 4]{St}) for the toric ideal of $A$ is $I_A = \ker \varphi$. Because the image of $\varphi$ is a domain, it follows that $I_A$ is prime. 
\end{remark}

Information about $R/I_A$ is encoded into the matrix. For example:
\begin{theorem}\cite[Proposition 3.1]{Binomi}
With $A$ as above, $\dim(R/I_A) = \rank(A)$.
\end{theorem}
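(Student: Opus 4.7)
The plan is to identify $R/I_A$ with a concrete subring of the Laurent polynomial ring and then compute its Krull dimension via transcendence degree.

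First I would use the remark just established: the map $\varphi\colon R \to S = \K[t_1^{\pm 1},\ldots,t_n^{\pm 1}]$ sending $x_i \mapsto t^{\alpha_i}$ has kernel exactly $I_A$, so the First Isomorphism Theorem gives
\[
  R/I_A \;\cong\; \varphi(R) \;=\; \K[t^{\alpha_1}, \ldots, t^{\alpha_s}] \;\subseteq\; S \text{.}
\]
In particular $R/I_A$ is a finitely generated $\K$-subalgebra of an integral domain, so it is itself a finitely generated $\K$-domain. For such rings Krull dimension equals transcendence degree over $\K$, so it suffices to compute $\operatorname{tr.deg}_\K \K[t^{\alpha_1}, \ldots, t^{\alpha_s}]$.

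Next I would show that this transcendence degree equals the maximum number of $\Z$-linearly independent vectors among $\alpha_1, \ldots, \alpha_s$, which is by definition $\rank(A)$. The key combinatorial observation is the following equivalence: Laurent monomials $t^{\beta_1}, \ldots, t^{\beta_k}$ are algebraically independent over $\K$ if and only if $\beta_1, \ldots, \beta_k$ are $\Z$-linearly independent in $\Z^n$. For the ``only if'' direction, a nontrivial integer relation $\sum m_i \beta_i = 0$ splits as $\sum_{m_i>0} m_i \beta_i = \sum_{m_i<0}(-m_i)\beta_i$, which translates into a nonzero binomial polynomial relation on the $t^{\beta_i}$. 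For the ``if'' direction, a polynomial relation among the $t^{\beta_i}$ would force a nontrivial $\K$-linear relation among distinct Laurent monomials in $S$ (since distinct $\Z$-linear combinations of the $\beta_i$ produce distinct exponent vectors when the $\beta_i$ are independent), contradicting that monomials form a $\K$-basis of $S$.

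Applying this equivalence, a transcendence basis of $\K[t^{\alpha_1}, \ldots, t^{\alpha_s}]$ over $\K$ is given by any maximal $\Z$-linearly independent subset of $\{\alpha_1,\ldots,\alpha_s\}$; the size of such a subset is $\rank(A)$, and this completes the chain
\[
  \dim(R/I_A) = \operatorname{tr.deg}_\K \K[t^{\alpha_1}, \ldots, t^{\alpha_s}] = \rank(A) \text{.}
\]
The main obstacle is the algebraic-independence criterion in the middle step, but once one sees that the Laurent monomials in $S$ are $\K$-linearly independent the argument is essentially bookkeeping on exponent vectors.
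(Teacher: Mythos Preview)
The paper does not supply its own proof of this statement; it merely quotes it as \cite[Proposition 3.1]{Binomi}. Your argument is the standard one and is correct: identify $R/I_A$ with the monomial subalgebra $\K[t^{\alpha_1},\ldots,t^{\alpha_s}]$ of the Laurent ring, invoke that Krull dimension equals transcendence degree for finitely generated $\K$-domains, and translate algebraic independence of Laurent monomials into $\Z$-linear independence of exponent vectors. The only step you leave implicit is that the remaining $t^{\alpha_j}$ are algebraic over the chosen maximal independent set; this follows immediately from your equivalence applied in the contrapositive (adding any further $\alpha_j$ creates a $\Z$-dependence, hence an algebraic dependence), so the argument is complete.
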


Toric ideals are not necessarily homogeneous with respect to the standard grading of $R$, i.e., $\deg(x_i) = 1$ for $i=1,\ldots,s$, or even non-standard graded. Because our primary interest is the minimal graded free resolution of toric ideals, it is necessary to know when $I_A$ is a (multi-)homogeneous ideal. The next lemma captures when $I_A$ is standard graded.

\begin{lemma}\cite[Lemma 4.14]{St}
\label{homogtoric}
  Let $A = [ \alpha_1 \; \cdots \;\alpha_s ]$ be an $n \times s$ matrix with $\alpha_i \in \Z^n$. Then $I_A$ is a homogeneous ideal if and only if there exists a vector $c \in \Q^n$ such that $\alpha_i \cdot c =1$ for all $i=1,\ldots,n$. Here, $\alpha_i \cdot c$ denotes the standard Euclidean inner product.
\end{lemma}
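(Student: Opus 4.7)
The plan is to show both directions by reducing to whether the natural binomial generators of $I_A$ are themselves standard-graded homogeneous.

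First I would observe the following preliminary fact: the toric ideal $I_A$ contains no monomials. Indeed, as noted in the remark right above the lemma, $I_A = \ker \varphi$ where $\varphi\colon R \to S$ sends $x_i$ to the unit $t^{\alpha_i}$ in the Laurent polynomial ring $S$; no monomial in $R$ can map to $0$, so $I_A$ contains no monomial. From this I would deduce the following dichotomy: if $I_A$ is homogeneous with respect to the standard grading, then for any $\alpha\in\ker(A)$ the two monomials $x^{\alpha_+}$ and $x^{\alpha_-}$ of the generator $x^{\alpha_+}-x^{\alpha_-}\in I_A$ must either have the same total degree, or else by taking graded pieces they would individually lie in $I_A$, contradicting the no-monomials fact. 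Consequently $I_A$ is homogeneous if and only if every such generating binomial is homogeneous, i.e., $|\alpha_+| = |\alpha_-|$ for all $\alpha\in\ker(A)$.

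Next I would rewrite this in linear-algebraic language. Setting $\mathbf{1} = (1,\ldots,1)\in\Q^s$, the equality $|\alpha_+| = |\alpha_-|$ is precisely the statement $\mathbf{1}\cdot\alpha = 0$. Hence $I_A$ is homogeneous if and only if
\[
  \ker(A) \subseteq \mathbf{1}^{\perp}\text{,}
\]
or equivalently (after tensoring with $\Q$, since $\ker(A)$ spans $\ker(A)\otimes\Q$ as a $\Q$-vector space) $\mathbf{1}$ lies in the $\Q$-orthogonal complement of $\ker(A)\otimes\Q$ inside $\Q^s$.

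Finally I would invoke the standard fact from linear algebra that the orthogonal complement of $\ker(A)\otimes\Q$ in $\Q^s$ equals the $\Q$-row space of $A$. Thus the condition is that $\mathbf{1}$ is a $\Q$-linear combination of the rows of $A$: there exists $c = (c_1,\ldots,c_n)\in\Q^n$ with $c^{\,T}A = \mathbf{1}$, which written column-by-column is exactly $c\cdot\alpha_i = 1$ for each $i = 1,\ldots,s$. This completes the equivalence.

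The only genuinely nonformal step is the first one, the no-monomials claim, together with the argument that homogeneity of $I_A$ forces each natural generator to be homogeneous; everything afterwards is a duality/row-space computation in $\Q$-linear algebra, so I expect this to be the main (and only real) obstacle.
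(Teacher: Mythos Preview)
Your argument is correct. Note, however, that the paper does not supply its own proof of this lemma: it is quoted verbatim from \cite[Lemma 4.14]{St}, so there is nothing in the present paper to compare against. Your approach is in fact the standard one (and essentially the one in Sturmfels): reduce homogeneity of $I_A$ to homogeneity of the natural binomial generators via the no-monomials observation, then translate $|\alpha_+|=|\alpha_-|$ for all $\alpha\in\ker(A)$ into the linear-algebra condition that $\mathbf{1}$ lies in the $\Q$-row space of $A$. One small remark: the index range ``$i=1,\ldots,n$'' in the lemma as stated is a typo in the paper; as you correctly write, it should be $i=1,\ldots,s$.
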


If  $L \cap \N^s = \{{\bf 0}\}$, we can induce an $\N^n$-grading on $R$, $I_A$, and $R/I_A$, by setting $\deg x_i = \alpha_i$ for $i=1,\ldots,s$. For example,
if each column of $A$ belongs to $\N^n$, then the condition $L \cap \N^s = \{{\bf 0}\}$ is satisfied.
If $I_A$ is $\N^n$-graded, then there is a minimal
multi-graded free resolution of $I_A$, i.e.,
\[
  0 \to \bigoplus_{\alpha\in \N^n} R(-\alpha)^{\beta_{l,\alpha}(I_A)} \to \bigoplus_{\alpha\in \N^n} R(-\alpha)^{\beta_{l-1,\alpha}(I_A)} \to \cdots \to \bigoplus_{\alpha\in \N^n} R(-\alpha)^{\beta_{0,\alpha}(I_A)} \to I_A \to 0 \text{,}
\]
where $R(-\alpha)$ denotes the $\N^n$-grading of $R$ twisted by $-\alpha$, i.e., $R(-\alpha)_\gamma = R_{\gamma-\alpha}$ for all $\gamma \in \N^n$. The 
\emph{multi-graded Betti number}  $\beta_{i,\alpha}(I_A)$ is the number of minimal generators of the $i$-th syzygy module of $I_A$ of multidegree $\alpha \in \N^n$.  
Each $\beta_{i,\alpha}(I_A)$ is equal to the rank of a reduced simplicial homology group of a simplicial complex related to $\alpha$ (see \cite[Theorem 12.12]{St}).

If there exists an integer $d > 0$ such that every column $\alpha_i$
of $A$ satisfies $|\alpha_i| = \sum_{j=1}^n a_{ij} = d$, then the standard grading and the $\N^n$-grading of $I_A$ are compatible in the following sense:
\begin{equation}\label{multi-standard-grading}
  \beta_{i,j}(I_A) = \sum_{|\alpha|=d\cdot j} \beta_{i,\alpha}(I_A).
\end{equation}

The theme of this paper is to understand when $I_A$ can be ``split'' into smaller toric ideals. The following result, which is undoubtedly known, describes one case in which $I_A$ is splittable.

\begin{lemma}\label{lem:disjoint-splitting}
  Let $A_1, \ldots, A_k$ be matrices with integer entries of dimensions
  $n_i \times s_i$ ($i=1,\ldots, k$) and
  set $R = \K[x_{1,1},\ldots,x_{1,s_1},\ldots,x_{k,1},\ldots,x_{k,s_k}]$.
  Consider the block matrix
  \[
    A = \begin{bsmallmatrix}
      A_1 & & 0 \\
      &\raisebox{0pt}{$\scalebox{.75}{$\ddots$}$} & \\
      0 &&A_k \\
    \end{bsmallmatrix}
    \in \Z^{(n_1+\cdots+n_k) \times (s_1 + \cdots + s_k )}\text{.}
  \]
  Then
  \[
    I_{A} = I_{A_1} + \cdots + I_{A_k} \subseteq R
  \]
  where $I_{A_i}$ is the toric ideal of ${A_i}$, but viewed as an ideal in $R$.
\end{lemma}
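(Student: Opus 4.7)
The plan is to reduce everything to the direct-sum decomposition of $\ker(A)$ induced by the block structure of $A$, and then handle the two inclusions separately. Observe that any $\alpha \in \Z^{s_1+\cdots+s_k}$ can be written uniquely as $\alpha = \alpha^{(1)} + \cdots + \alpha^{(k)}$ where $\alpha^{(i)}$ is supported on the variables $x_{i,1},\ldots,x_{i,s_i}$. Because $A$ is block diagonal, $A\alpha = 0$ if and only if $A_i\alpha^{(i)} = 0$ for every $i$. Therefore
\[
  \ker(A) \;=\; \ker(A_1) \oplus \cdots \oplus \ker(A_k),
\]
where each $\ker(A_i)$ is viewed as the subgroup of $\Z^{s_1+\cdots+s_k}$ supported on the $i$-th block.

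For the inclusion $I_{A_1} + \cdots + I_{A_k} \subseteq I_A$, note that any generator $x^{\beta_+} - x^{\beta_-}$ of $I_{A_i}$ (with $\beta \in \ker(A_i)$) becomes, after extension by zero, a binomial of the form $x^{\alpha_+} - x^{\alpha_-}$ for some $\alpha \in \ker(A)$, and so lies in $I_A$.

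The main content lies in the opposite inclusion $I_A \subseteq I_{A_1} + \cdots + I_{A_k}$. It suffices to prove this on generators, so fix $\alpha \in \ker(A)$ and decompose $\alpha = \alpha^{(1)} + \cdots + \alpha^{(k)}$ as above. The key observation — and the only place where one uses that the variable sets of the blocks are pairwise disjoint — is that the supports of the $\alpha^{(i)}$ are disjoint. Consequently the positive and negative parts split: $\alpha_+ = \alpha^{(1)}_+ + \cdots + \alpha^{(k)}_+$ and $\alpha_- = \alpha^{(1)}_- + \cdots + \alpha^{(k)}_-$. Writing $a_i = x^{\alpha^{(i)}_+}$ and $b_i = x^{\alpha^{(i)}_-}$, we then have $x^{\alpha_+} = a_1\cdots a_k$ and $x^{\alpha_-} = b_1\cdots b_k$, and a standard telescoping identity yields
\[
  a_1\cdots a_k - b_1\cdots b_k \;=\; \sum_{i=1}^{k} (a_1\cdots a_{i-1})\,(a_i - b_i)\,(b_{i+1}\cdots b_k).
\]
Each factor $a_i - b_i = x^{\alpha^{(i)}_+} - x^{\alpha^{(i)}_-}$ is a generator of $I_{A_i}$ (since $\alpha^{(i)} \in \ker(A_i)$), so the right-hand side lies in $I_{A_1} + \cdots + I_{A_k}$, completing the proof.

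The only real subtlety is the disjoint-support step that lets the positive and negative parts of $\alpha$ split blockwise; once that is in hand, the telescoping identity is routine. No further obstacles are anticipated.
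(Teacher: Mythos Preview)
Your proof is correct and follows essentially the same approach as the paper. The paper phrases the reverse inclusion as an induction on $k$, peeling off one block at a time via the identity $x^{\alpha_+}-x^{\alpha_-} = x^{\gamma_+}(x^{\beta_+}-x^{\beta_-}) + x^{\beta_-}(x^{\gamma_+}-x^{\gamma_-})$; unwinding that induction gives precisely your telescoping sum, so the two arguments differ only cosmetically.
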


\begin{proof}
  For each $i=1,\ldots,k$, set $R_i = \K[x_{i,1},\ldots,x_{i,s_i}]$. Let $\beta \in {\rm ker}(A_i)$. So $x^{\beta_+} - x^{\beta_-} \in I_{A_i} \subseteq R_i$. But then
  \[
    \gamma = (\underbrace{0,\ldots,0}_{s_1+\cdots+s_{i-1}},\beta,\underbrace{0,\ldots,0}_{s_{i+1}+\cdots +s_k}) \in \ker(A) \text{.}
  \]
  So $x^{\beta_+} - x^{\beta_-} = x^{\gamma_+}- x^{\gamma_-} \in I_A$. Thus $I_{A_i} \subseteq I_A$, if $I_{A_i}$ is viewed as an ideal of $R$. Consequently, $I_{A_1}+\cdots + I_{A_k} \subseteq I_A$.

  For the reverse inclusion, we do induction on $k$, where our base case is $k =2$. Suppose that $\alpha \in {\rm ker}(A) \subseteq \Z^{s_1+s_2}$. Write $\alpha$ as $\alpha = (\beta,\gamma)$ where $\beta \in \Z^{s_1}$ and $\gamma \in \Z^{s_2}$. Then
  \begin{eqnarray*}
    x^{\alpha_+}-x^{\alpha_-} & = &x^{\beta_+}x^{\gamma_+}- x^{\beta_-}x^{\gamma _-} 
     = x^{\gamma_+}(x^{\beta_+} - x^{\beta_-}) +
      x^{\beta_-}(x^{\gamma_+} - x^{\gamma_-}).
  \end{eqnarray*}
  But $x^{\beta_+} - x^{\beta_-} \in I_{A_1}$ and $x^{\gamma_+} - x^{\gamma_-} \in I_{A_2}$. So $I_{A} \subseteq I_{A_1} + I_{A_2}$.

  Now suppose that $k > 2$, and let $\alpha \in {\rm ker}(A) \subseteq \Z^{s_1+\cdots+s_k}$. Write $\alpha$ as $(\beta,\gamma)$ with $\beta \in \Z^{s_1+\cdots+s_{k-1}}$ and $\gamma \in \Z^{s_k}$. As above,
  \[
    x^{\alpha_+}-x^{\alpha_-} = x^{\gamma_+}(x^{\beta_+} - x^{\beta_-}) + x^{\beta_-}(x^{\gamma_+} - x^{\gamma_-}) \text{.}
  \]
  By induction, $x^{\beta_+}-x^{\beta_-} \in I_{A_1}+\cdots + I_{A_{k-1}}$, while $x^{\gamma_+} - x^{\gamma_-} \in I_{A_k}$. The result now holds.
\end{proof}

\begin{theorem}\label{kunneth}
  With the notation and hypotheses of Lemma \ref{lem:disjoint-splitting}, suppose that in addition that the matrix $A$ induces an $\N^{n_1+\cdots+n_k}$-grading on $R/I_A$. Then for all $i \geq 0$ and $\alpha \in \N^{n_1+\cdots+n_k}$,
  \[
    \beta_{i,\alpha}(R/I_A) = \sum_{\substack{i_1+\cdots+i_k = i \\ i_j \in \N}} \beta_{i_1,\alpha_1}(R/I_{A_1})\beta_{i_2,\alpha_2}(R/I_{A_2}) \cdots \beta_{i_k,\alpha_k}(R/I_{A_k})
  \]
  where \[\alpha_i = (\underbrace{0,\ldots,0}_{n_1+\cdots+n_{i-1}}, a_{i,1},\ldots,a_{i,n_i},\underbrace{0,\ldots,0}_{n_{i+1}+\cdots +n_k}) ~~\mbox{if $\alpha = (a_{1,1},\ldots,a_{k,n_k})$.}\]
\end{theorem}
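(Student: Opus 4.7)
The plan is to recognize this as a Künneth-type formula and prove it by constructing an explicit minimal multi-graded free resolution of $R/I_A$ as a tensor product of the minimal resolutions of the factors $R_i/I_{A_i}$. The crucial structural observation is that because the variables grouping $x_{i,\bullet}$ are pairwise disjoint, the polynomial ring decomposes as a $\K$-algebra tensor product
\[
  R \;=\; R_1 \otimes_\K R_2 \otimes_\K \cdots \otimes_\K R_k, \qquad R_i = \K[x_{i,1},\ldots,x_{i,s_i}],
\]
and, combining this with Lemma \ref{lem:disjoint-splitting}, we obtain
\[
  R/I_A \;\cong\; (R_1/I_{A_1}) \otimes_\K (R_2/I_{A_2}) \otimes_\K \cdots \otimes_\K (R_k/I_{A_k})
\]
as multi-graded $\K$-algebras, where the multi-grading on the right is exactly the $\N^{n_1+\cdots+n_k}$-grading specified in the statement (the block-diagonal form of $A$ ensures that the degree of $x_{i,j}$ in $R$ is the zero-padded version of its degree in $R_i$).

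Next I would let $F_i^\bullet \twoheadrightarrow R_i/I_{A_i}$ be a minimal multi-graded free resolution over $R_i$ (which exists by hypothesis) and form the total tensor-product complex $F^\bullet := F_1^\bullet \otimes_\K \cdots \otimes_\K F_k^\bullet$. Each $F_i^p$ is a free $R_i$-module, and the identification $R_i^{a} \otimes_\K R_j^{b} \cong R^{ab}$ shows that tensor products of free $R_i$-modules over $\K$ are free $R$-modules; hence $F^\bullet$ is a complex of free $R$-modules. To verify exactness, I would argue by induction on $k$: for $k=2$, view $F_1^\bullet \otimes_\K F_2^\bullet$ as the total complex of a double complex. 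Since $\K$ is a field, tensoring over $\K$ is exact, so the spectral sequence collapses and the homology of the total complex equals $H_\bullet(F_1^\bullet) \otimes_\K H_\bullet(F_2^\bullet) = (R_1/I_{A_1}) \otimes_\K (R_2/I_{A_2})$ concentrated in homological degree zero. Minimality is inherited: each differential in $F_i^\bullet$ has entries in the graded maximal ideal of $R_i$, so every entry of the tensor differential lies in the graded maximal ideal of $R$.

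Finally I would read off the Betti numbers directly from the $i$-th term
\[
  F^i \;=\; \bigoplus_{i_1 + \cdots + i_k = i}\; F_1^{i_1} \otimes_\K \cdots \otimes_\K F_k^{i_k}.
\]
The $\N^{n_1+\cdots+n_k}$-graded piece of $F_1^{i_1} \otimes_\K \cdots \otimes_\K F_k^{i_k}$ at a multidegree $\alpha = (\alpha_1,\ldots,\alpha_k)$ is the tensor product of the $\alpha_j$-graded pieces of $F_j^{i_j}$, because each $F_j^{i_j}$ is supported in multidegrees with zero entries outside the $j$-th block. Counting ranks over $\K$ of the generators in multidegree $\alpha$ on both sides then gives exactly the claimed formula.

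I expect the routine verification of exactness is easy; the main technical care lies in the multi-graded bookkeeping, namely checking that the $\N^{n_1+\cdots+n_k}$-grading on $R$ does restrict on each tensor factor $R_i \subseteq R$ to the original $\N^{n_i}$-grading (with zero entries in the other blocks), so that a multidegree-$\alpha$ generator in $F^i$ corresponds uniquely to a tuple of multidegree-$\alpha_j$ generators in the $F_j^{i_j}$. Once this is set up, the standard Künneth argument over a field does all the work.
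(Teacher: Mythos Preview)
Your proposal is correct and follows essentially the same route as the paper: decompose $R/I_A$ as a tensor product over $\K$ of the $R_i/I_{A_i}$, tensor the minimal resolutions to obtain a minimal free resolution over $R$, and read off the Betti numbers using the block-support structure of the multigrading. The paper offloads the exactness and minimality of the tensor-product resolution to a citation, whereas you sketch the Künneth/spectral-sequence argument directly; the only step you leave implicit that the paper makes explicit is the identification $\beta_{i_j,\alpha_j}(R_j/I_{A_j}) = \beta_{i_j,\alpha_j}(R/I_{A_j})$ when passing from the small rings $R_j$ to the big ring $R$.
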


\begin{proof}
  Let $R = \K[x_{i,1},\ldots,x_{k,s_k}]$ and set $R_i = \K[x_{i,1},\ldots,x_{i,s_i}]$. We give $R_i$ an $\N^{n_1+\cdots+n_k}$-grading by using the matrix $A_i$, but viewing $A_i$ as an $(n_1+\cdots+n_k) \times s_i$ matrix where the first $n_1+\cdots+n_{i-1}$ rows and the last $n_{i+1}+\cdots+n_k$ rows all consist of zeroes. As a consequence, if $\beta_{k,\delta}(R_i/I_{A_i}) \neq 0$, then ${\rm supp}(\delta) \subseteq \{n_1+\cdots+n_{i-1}+1,\ldots,n_1+\cdots+n_i\}$.

  If we abuse notation and view $I_{A_i}$ as both an ideal of $R$ and $R_i$, we have
  \[
    R/I_A = R/(I_{A_1}+\cdots+I_{A_k}) \cong R_1/I_{A_1} \otimes_\mathbb{K} R_2/I_{A_2} \otimes_\mathbb{K} \cdots \otimes_\mathbb{K} R_k/I_{A_k} \text{.}
  \]
  This follows since each $I_{A_i}$ generated by binomials only in the variables $\{x_{i,1},\ldots,x_{i,s_i}\}$. The multi-graded minimal free resolution of $R/I_A$ is then the tensor product of the multi-graded resolutions of the $R_i/I_{A_i}$'s (see \cite[Lemma 2.1]{JK} which does the standard graded case for $k=2$, but the proof extends naturally to the multi-graded case and to all $k$ by induction).

  It then follows by the K\"unneth formula that
  \[
    \beta_{i,\alpha}(R/I_A) = \sum_{\substack{i_1+\cdots+i_k = i \\ i_j \in \N}} \sum_{\substack{\gamma_1+\cdots+\gamma_k = \alpha \\ \gamma_j \in \N^{n_1+\cdots+n_k}}}\beta_{i_1,\gamma_1}(R_1/I_{A_1})\cdots \beta_{i_k,\gamma_k}(R_k/I_{A_k}) \text{.}
  \]
  As noted above, if ${\rm supp(\gamma)} \not\subseteq \{n_1+\cdots+n_{i-1}+1,\ldots,n_1+\cdots+n_i\}$, then $\beta_{k,\gamma}(R_i/I_{A_i}) = 0$. So
  we can assume the support of each index $\gamma_i$ is a subset of $\{n_1+\cdots+n_{i-1}+1,\ldots,n_1+\cdots+n_i\}$. But then the only decomposition $\gamma_1+\cdots+\gamma_k = \alpha$ that satisfies this condition is the decomposition $\alpha_1+\cdots+\alpha_k = \alpha$ with the $\alpha_i$'s defined as in the statement, and thus,
  \[
    \sum_{\substack{\gamma_1+\cdots+\gamma_k = \alpha \\ \gamma_j \in \N^{n_1+\cdots+n_k}}}\beta_{i_1,\gamma_1}(R_1/I_{A_1})\cdots \beta_{i_k,\gamma_k}(R_k/I_{A_k}) = \beta_{i_1,\alpha_1}(R_1/I_{A_1}) \cdots \beta_{i_k,\alpha_k}(R_k/I_{A_k}) \text{.}
  \]
  To complete the proof, note that $R_i/I_{A_i}$ and $R/I_A$ will have same graded Betti numbers with respect to our multi-grading, so we can replace each $\beta_{k,\gamma}(R_i/I_{A_i})$ with $\beta_{k,\gamma}(R/I_{A_i})$.
\end{proof}

\subsection{Toric ideals of graphs}
Let $G = (V(G),E(G))$ denote a finite simple
graph (a graph with no loops or multiple edges)
with vertex set $V(G) = \{x_1,\ldots,x_n\}$ and
edge set $E(G) = \{e_1,\ldots,e_q\}$
where each $e_i$ is a two-element
subset of $V$. Set $R = \mathbb{K}[E(G)] = \K[e_1,\ldots,e_q]$
and $S = \mathbb{K}[V(G)]= \K[x_1,\ldots,x_n]$, and define
the $\mathbb{K}$-algebra homomorphism
$\varphi\colon R \rightarrow S$ by 
\[e_i \mapsto x_{i,1}x_{i,2} ~~\mbox{where
$e_i = \{x_{i,1},x_{i,2}\}$}.\]
The {\em toric ideal} of $G$ is the ideal
$I_G = {\rm ker} \varphi$.

The toric ideal of $G$ is the toric
ideal of the incidence matrix of $G$.
More precisely, order the elements of $V(G)$ and 
$E(G)$, then the {\em incidence matrix} of $G$
is the $|V(G)| \times |E(G)|$ matrix $A$ where
$A_{i,j} = 1$ if $x_i \in e_j$ and $0$ otherwise.
Each column of $A$ contains only two
ones, and the remaining entries are zero.
Consequently, $I_G$ is both a graded ideal
(take the vector $c = \left(\frac{1}{2},\frac{1}{2},
\ldots,\frac{1}{2}\right)$ and apply Lemma
\ref{homogtoric})
and a multi-graded ideal. In particular, by \eqref{multi-standard-grading},
we have
\begin{equation}\label{toricidealgrading}
  \beta_{i,j}(I_G) = \sum_{|\alpha|=2j} \beta_{i,\alpha}(I_G).
\end{equation}

The dimension of $R/I_G$ depends upon whether
or not $G$ is bipartite. We say that $G$ is
a {\em bipartite graph} if there is a 
partition of the vertices $V(G) = V_1 \sqcup V_2$
such that every $e \in E(G)$ has the property
that $e \cap V_1 \neq \emptyset$ and $e \cap V_2
\neq \emptyset$. This is equivalent to having no odd cycles in $G$, a fact which we will make use of. Furthermore,
$G$ is {\em connected} if for
every $x,y \in V(G)$ with $x\neq y$, there
exists a sequence of edges $e_1,\ldots,e_t$ in
$E$ such that $x \in e_1$, $y \in e_t$,and
$e_{i} \cap e_{i+1} \neq \emptyset$
for $i=1,\ldots,t-1$.

\begin{theorem}\cite[Corollary 10.1.21]{V}
  \label{prop:bipartite-dim}
  If $G$ is a finite simple connected graph on $n$ vertices, then
  \[
    \dim(R/I_G) = \begin{cases}
      n &\ \text{if \textit{G} is not bipartite}\\
      n-1 &\ \text{if \textit{G} is bipartite.}\\
    \end{cases}
  \]
\end{theorem}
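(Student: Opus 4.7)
The plan is to reduce the dimension computation to a rank computation via the theorem cited from \cite[Proposition 3.1]{Binomi}, which tells us that $\dim(R/I_G) = \rank(A)$, where $A$ is the incidence matrix of $G$. Since $A$ is $n \times q$ with each column having exactly two $1$'s (and all other entries $0$), the problem becomes purely linear-algebraic: we need to show that $\rank(A) = n$ when $G$ is not bipartite and $\rank(A) = n - 1$ when $G$ is bipartite.

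To analyze $\rank(A)$, I would work with the left kernel and observe that a row vector $v = (v_1, \ldots, v_n) \in \Q^n$ satisfies $v^T A = 0$ if and only if $v_i + v_j = 0$ for every edge $\{x_i, x_j\} \in E(G)$. This algebraic condition propagates along walks in $G$: along any path starting at a fixed vertex $x_0$ with $v_{x_0} = c$, the values must alternate as $c, -c, c, -c, \ldots$. Since $G$ is connected, the value at $x_0$ determines the value at every other vertex via any chosen path; hence the dimension of the left kernel is at most $1$.

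The case split then hinges on whether self-consistency forces $c = 0$. If $G$ is bipartite with bipartition $V(G) = V_1 \sqcup V_2$, then the assignment $v_{x} = 1$ for $x \in V_1$ and $v_{x} = -1$ for $x \in V_2$ is a nonzero element of the left kernel, so the left kernel is exactly $1$-dimensional and $\rank(A) = n - 1$. If $G$ is not bipartite, then $G$ contains an odd closed walk through some vertex $x_0$; traversing it and comparing with $v_{x_0} = c$ yields $c = -c$, so $c = 0$, forcing $v = 0$ by connectedness. Hence the left kernel is trivial and $\rank(A) = n$.

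The argument is essentially routine once the dimension-equals-rank theorem is in hand; the only subtle point is ensuring that the propagation is well defined, which is where connectedness is used (to reach every vertex) and where the parity of cycles becomes the decisive invariant (to rule out consistency obstructions in the non-bipartite case). I would therefore present the proof in the order above: invoke $\dim(R/I_G) = \rank(A)$, describe the left kernel equations, and then finish with the two-case analysis driven by the well-known equivalence between bipartiteness and the absence of odd cycles that the excerpt already recalled.
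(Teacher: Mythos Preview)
The paper does not supply its own proof of this statement; it is quoted as \cite[Corollary 10.1.21]{V} and used as a black box. Your argument is correct and entirely self-contained within the tools the paper already records: you invoke \cite[Proposition 3.1]{Binomi} to reduce to $\rank(A)$, and your left-kernel analysis (the equations $v_i+v_j=0$ along edges, propagation via connectedness, and the parity obstruction from an odd cycle) is the standard and cleanest way to compute the rank of an incidence matrix. Nothing is missing.
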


Work of Ohsugi-Hibi \cite{OH} and Villarreal \cite{VRees} 
allows us to describe the minimal generators of
$I_G$ in terms of the combinatorics of $G$. We
summarize the relevant results. 

\begin{definition}
  Let $G$ be a finite simple graph. A \emph{walk} is a sequence of edges $w=(e_{1},\dots,e_{k})$ such that $e_{i} \cap e_{i+1} \neq \emptyset$
  for $i=1,\ldots,k$. This is equivalent to specifying a sequence of vertices $(x_{1},\dots,x_{k},x_{k+1})$ such that $G$ has an edge which is associated to any consecutive $x_{i}$ and $x_{i+1}$ in the sequence. A walk is an \emph{even} walk of $k$ is even. A \emph{closed walk} is a walk which has a vertex sequence $(x_{1},\dots,x_{k+1})$ such that $x_{1}=x_{k+1}$.
\end{definition}

In the sequel, we will also require the family of path graphs.
The \emph{path graph} $P_{n}$ is the graph with vertex set $V(P_{n})=\{x_{1},\dots, x_{n+1}\}$ and edge set $E(P_{n})=\{\{x_{1},x_{2}\},\dots,\{x_{n},x_{n+1}\}\}$.

Closed even walks in $G$ correspond to elements of $I_G$. Indeed, let $w=(e_{i_{1}},e_{i_{2}},\dots, e_{i_{2n}})$ be a closed even walk corresponding to the following sequence of vertices $(x_{j_1}, x_{j_2}, \dots, x_{j_{n+1}} = x_{j_1})$ which are not necessarily distinct. We associate the walk $w$ with the binomial
\[
  f_w=\prod_{2 \nmid j}e_{i_j} - \prod_{2 \mid j} e_{i_j} \text{.}
\]
This element belongs to the ideal $I_{G}$ since 
\begin{align*}
  \phi(f_{w})&= \phi(e_{i_1})\phi(e_{i_3}) \cdots \phi(e_{i_{2n-1}}) - \phi(e_{i_2}) \phi(e_{i_4}) \cdots \phi(e_{i_{2n}})
  = \prod_{k=1}^n x_{j_k} - \prod_{k=1}^n x_{j_k} = 0 \text{.}
\end{align*}
The set of all binomials associated with closed even walks forms a generating set of $I_G$.  Using the following notion, we can further reduce our
generating set.

\begin{definition} Let $I$ be a toric ideal.
  A binomial $x^{\alpha_+} - x^{\alpha_-} \in I$ is \emph{primitive} if there exists no binomial $x^{\beta_+} - x^{\beta_-} \in I$ such that $x^{\beta_+} \mid x^{\alpha_+}$ and $x^{\beta_-} \mid x^{\alpha_-}$. A closed even walk $w$ in a graph $G$ is \emph{primitive} if the corresponding binomial $f_w$ is primitive in $I_G$.
\end{definition}

\begin{theorem}\cite[Proposition 10.1.10]{V}\label{thm:univ-grb}
  Let $G$ be a finite simple graph. Then $I_G$ is generated by binomials which correspond to closed even walks that are also primitive.
\end{theorem}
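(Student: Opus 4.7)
The plan is to combine the observation made immediately above the theorem (that the binomials $f_w$ attached to closed even walks already generate $I_G$) with the general principle that every toric ideal is generated by its primitive binomials. Under the correspondence between closed even walks in $G$ and binomials in $I_G$ set up in the preceding paragraph, primitivity of the walk $w$ is by definition primitivity of the associated binomial $f_w$, so proving the theorem reduces to showing that the primitive binomials of $I_G$ generate the ideal.

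For this reduction, I would fix an arbitrary binomial $f = x^{\alpha_+} - x^{\alpha_-} \in I_G$ and induct on the total degree $|\alpha_+|=|\alpha_-|$. If $f$ is primitive there is nothing to do. Otherwise, by the definition of primitivity, there is a binomial $g = x^{\beta_+} - x^{\beta_-} \in I_G$ with $x^{\beta_+} \mid x^{\alpha_+}$ and $x^{\beta_-} \mid x^{\alpha_-}$ and with at least one of these divisibilities proper. Set $\gamma_+ := \alpha_+ - \beta_+$ and $\gamma_- := \alpha_- - \beta_-$; both lie in $\N^s$ by construction, and $\gamma_+ - \gamma_- = \alpha - \beta \in \ker A$, so (after cancelling any common monomial factor of its two terms) the binomial $h := x^{\gamma_+} - x^{\gamma_-}$ lies in $I_G$ as well.

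The key identity is the algebraic decomposition
\[f \;=\; x^{\gamma_+}\bigl(x^{\beta_+}-x^{\beta_-}\bigr) \;+\; x^{\beta_-}\bigl(x^{\gamma_+}-x^{\gamma_-}\bigr) \;=\; x^{\gamma_+} g \;+\; x^{\beta_-} h.\]
Since at least one divisibility is proper and $|\beta_+|=|\beta_-|$, $|\alpha_+|=|\alpha_-|$, we have $|\beta_+| < |\alpha_+|$, and therefore both $g$ and $h$ have total degree strictly smaller than $f$. By the inductive hypothesis, each of $g$ and $h$ is an $R$-linear combination of primitive binomials of $I_G$, so the identity exhibits $f$ as such a combination as well. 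Translating back, the primitive binomials that appear are exactly the $f_w$ attached to primitive closed even walks $w$.

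The main point to verify carefully is that $h$ really lies in $I_G$ and really has strictly smaller degree than $f$: membership follows from $\alpha-\beta \in \ker A$ (together with the observation that cancelling a common monomial factor of the two terms preserves this), and the strict degree drop follows from the properness of one of the divisibility relations combined with the fact that $I_G$ is homogeneous. Once this reduction step is established, the induction closes immediately, and the theorem follows by re-interpreting ``primitive binomial of $I_G$'' as ``$f_w$ for a primitive closed even walk $w$'' via the correspondence introduced before the statement.
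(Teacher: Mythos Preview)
The paper does not supply its own proof of this statement; it is quoted from \cite[Proposition 10.1.10]{V} without argument, so there is no in-paper proof to compare against.

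Your proposal nonetheless has a genuine gap. The induction in your second and third paragraphs correctly establishes the general fact that any homogeneous toric ideal is generated by its primitive binomials. The problem is the reduction in your first paragraph: you assert that, because primitivity of a walk $w$ is \emph{defined} as primitivity of $f_w$, proving the theorem reduces to showing that the primitive binomials generate $I_G$. That implication would be valid only if every primitive binomial of $I_G$ were already of the form $f_w$ for some closed even walk $w$. The paragraph preceding the theorem asserts merely that closed even walks yield binomials and that these generate $I_G$; it does not claim the converse, and for non-primitive binomials the converse is false. For instance, if $G$ is the disjoint union of two $4$-cycles with edges $e_1,\ldots,e_4$ and $e_5,\ldots,e_8$, then $e_1e_3e_5e_7 - e_2e_4e_6e_8 \in I_G$ is a pure binomial that is not $f_w$ for any closed walk $w$, since a walk is connected while the support spans two components.

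The step you are missing---that every \emph{primitive} binomial of $I_G$ does arise as $f_w$ for some closed even walk---is exactly the graph-theoretic content of the cited result. One has to take a primitive $\alpha \in \ker(A)$, use the balance condition at each vertex (the number of ``odd'' edges equals the number of ``even'' edges incident there) to thread the support of $\alpha$ into a single alternating closed walk, and invoke primitivity to ensure the walk exhausts all of $\alpha$ rather than a proper sub-multiset. Your induction gets you down to the primitive binomials but stops short of this combinatorial argument, so as written the proof does not close.
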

 
\begin{remark}
  The conclusion of \cite[Proposition 10.1.10]{V} is stronger where it
  is shown that the closed even walks that are primitive correspond to a universal Gr\"obner basis of $I_G$. \end{remark}

\section{Splitting of toric ideals}
\label{sec:split-tor-id}

Given an $n \times s$ matrix $A$ with entries in $\Z$, we give a sufficient condition on $A$ that implies that the toric ideal $I_A$ is splittable, i.e., $I_A$ can be written as the sum of two (or more) toric ideals.
Although $I_A$ need not be (multi-)graded, when $A$ is chosen so that $I_A$ is also $\N^n$-graded, we can describe the multi-graded Betti numbers
in terms of those of the Betti numbers of the smaller ideals.
This result will be the consequence of the following technical lemmas.
\begin{lemma}
  \label{lem:tech1}
  Let $\alpha, \beta \in \Z^s$ be two linearly
  independent vectors with positive and negative entries such that $\gamma = \alpha + \beta$ also has positive and negative entries.
  Then $(x^{\alpha_+}-x^{\alpha_-})\nmid (x^{\gamma_+} - x^{\gamma_-})$ and
  $(x^{\beta_+}-x^{\beta_-})\nmid (x^{\gamma_+} - x^{\gamma_-})$.\end{lemma}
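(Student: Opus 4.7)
The plan is to argue by contradiction via monomial substitutions into a one-variable Laurent polynomial ring. Assume $(x^{\alpha_+}-x^{\alpha_-}) \mid (x^{\gamma_+}-x^{\gamma_-})$ in $\K[x_1,\ldots,x_s]$. For each $c \in \Z^s$ I would consider the $\K$-algebra homomorphism
\[
  \varphi_c \colon \K[x_1,\ldots,x_s] \longrightarrow \K[t, t^{-1}], \qquad x_i \mapsto t^{c_i}.
\]
For any $\delta \in \Z^s$, a direct computation gives $\varphi_c(x^{\delta_+}-x^{\delta_-}) = t^{c\cdot \delta_+} - t^{c\cdot \delta_-}$, which vanishes precisely when $c\cdot \delta = 0$. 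Applying $\varphi_c$ to the assumed divisibility relation therefore yields the implication
\[
  c\cdot \alpha = 0 \;\Longrightarrow\; c\cdot \gamma = 0 \qquad \text{for every } c \in \Z^s.
\]

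Next I would translate this into a linear algebra statement: the implication says $\alpha^\perp \subseteq \gamma^\perp$ inside $\Z^s$. Tensoring with $\Q$ and using that both $\alpha$ and $\gamma$ are nonzero (each has at least one positive entry by hypothesis), the two orthogonal complements $\alpha^\perp_{\Q}$ and $\gamma^\perp_{\Q}$ are $\Q$-hyperplanes in $\Q^s$, so the inclusion forces equality. Consequently $\gamma$ is a $\Q$-multiple of $\alpha$, and therefore $\beta = \gamma - \alpha$ is likewise a $\Q$-multiple of $\alpha$. Clearing denominators then produces a nontrivial $\Z$-linear relation between $\alpha$ and $\beta$, contradicting the linear independence hypothesis.

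The statement $(x^{\beta_+}-x^{\beta_-}) \nmid (x^{\gamma_+}-x^{\gamma_-})$ follows by the identical argument with $\alpha$ and $\beta$ interchanged, as both the hypothesis and the conclusion are symmetric in these two vectors. I do not anticipate any serious obstacle: the entire proof is packaged into the choice of substitutions $\varphi_c$, after which everything reduces to the observation that a nonzero integer vector has a codimension-one rational annihilator. The only subtle point worth articulating carefully is the transition from ``$\varphi_c$ kills $f$ for every $c \in \alpha^\perp$'' to the lattice-theoretic inclusion $\alpha^\perp \subseteq \gamma^\perp$, which is immediate from the fact that $\varphi_c$ is a ring homomorphism, so it respects the multiplicative factorization coming from the divisibility relation.
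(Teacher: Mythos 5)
Your proof is correct, but it takes a genuinely different route from the paper's. The paper works directly with the polynomial identity $x^{\gamma_+}-x^{\gamma_-}=f\cdot(x^{\alpha_+}-x^{\alpha_-})$: it tracks how the monomials $f_i x^{\alpha_+}$ and $f_j x^{\alpha_-}$ must cancel in pairs, forcing the terms of $f$ into a telescoping chain $f_i=x^{\gamma_+-i\alpha_++(i-1)\alpha_-}$, and comparing the final term with $x^{\gamma_-}$ yields $\gamma=s\alpha$, hence $\beta=(s-1)\alpha$, contradicting linear independence. You instead specialize along monomial one-parameter substitutions: composing the assumed divisibility with $\varphi_c\colon \K[x_1,\ldots,x_s]\to\K[t,t^{-1}]$, $x_i\mapsto t^{c_i}$, shows that every $c\in\Z^s$ with $c\cdot\alpha=0$ also satisfies $c\cdot\gamma=0$, so $\alpha^{\perp}_{\Q}\subseteq\gamma^{\perp}_{\Q}$; since $\alpha\neq 0$ and $\gamma\neq 0$ (each has a positive entry by hypothesis) these are hyperplanes, hence equal, so $\gamma\in\Q\alpha$ and then $\beta=\gamma-\alpha\in\Q\alpha$, which after clearing denominators contradicts independence (the case $\beta=0$ is also excluded this way). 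The two points you single out are indeed the only ones needing care---that $\varphi_c$ kills the factor $x^{\alpha_+}-x^{\alpha_-}$ exactly when $c\cdot\alpha=0$, and that the integer points of $\alpha^{\perp}$ span $\alpha^{\perp}_{\Q}$---and both are immediate. Your argument buys a cleaner and slightly more general statement: for any nonzero integer vectors, divisibility of the associated binomials already forces $\gamma$ to be a rational multiple of $\alpha$, with no sign hypotheses needed on $\alpha$ and $\beta$ themselves. The paper's expansion argument, by contrast, yields explicit information about the shape of the putative quotient $f$, but none of that extra information is used later (Lemma \ref{lem:tech} only needs the non-divisibility to ensure both coefficient polynomials in equation \eqref{eq:binom} are nonzero), so nothing is lost by your approach.
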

\begin{proof}
 We prove only the statement about
 $x^{\alpha_+}-x^{\alpha_-}$ since the other
 statement is similar.  Suppose that
  $x^{\gamma_+} - x^{\gamma_-} = f\cdot(x^{\alpha_+} - x^{\alpha_-})$.  If
  $f = f_1 + f_2 + \cdots+f_s$, where the $f_i$'s are the terms of $f$, then
  when we expand out the right hand side, we get
  \[f_1x^{\alpha_+} + f_2x^{\alpha_+} + \cdots + f_sx^{\alpha_+} -
  f_1x^{\alpha_-} - f_2x^{\alpha_-} - \cdots - f_sx^{\alpha_-}.
  \]
  If $f =f_1$ was a single monomial term, then we would
  have $f_1 = x^{\gamma_+- \alpha_+}$ and $f_1 = x^{\gamma_--\alpha_-}$, or in
  other words, $\gamma = \gamma_+-\gamma_- = \alpha_+-\alpha_- = \alpha$, and thus $\beta =
  0$, contradicting our choice of $\beta$.  So $s \geq 2$.
  Furthermore, the monomial $x^{\gamma_+}$ appears exactly once in the expansion.
  Indeed, if $x^{\gamma_+} = f_ix^{\alpha_+} - f_jx^{\alpha_-}$ for some $i \neq j$,
  this means that $\supp(\alpha_+) \cup \supp(\alpha_-) \subseteq \supp(\gamma_+)$.
  But the support of $\gamma_-$ is disjoint from that of $\gamma_+$.  However,
  the support of every term in the expansion contains $\supp(\alpha_+)$ or $\supp(\alpha_-)$,
  which means that $\gamma_-$ cannot appear on the right hand side.  The
  same argument now also applies to $x^{\gamma_-}$.

  So, without loss of generality,
  suppose that $f_1x^{\alpha_+} = x^{\gamma_+}$ and $f_sx^{\alpha_-} = 
  x^{\gamma_-}$ (note that we could have $f_1x^{\alpha_+} = x^{\gamma_-}$ and $f_sx^{\alpha_-} =  x^{\gamma_+}$, but our argument will also work for this case).
  So, $f_1 = x^{\gamma_+-\alpha_+}$.  The term $f_1x^{\alpha_-} = x^{\gamma_+-\alpha_++\alpha_-}$ must now cancel out with some term of the form
  $f_ix^{\alpha_+}$, say $f_2x^{\alpha_+} = f_1x^{\alpha_-}$ after relabelling.  
  This means that $f_2 = x^{\gamma_+ -2\alpha_++\alpha_-}$.  Now $f_2x^{\alpha_-}$ must
  cancel with some term $f_ix^{\alpha_+}$, say $f_3x^{\alpha_+}$.  This
  forces $f_3 = x^{\gamma_+ -3\alpha_++2\alpha_-}$.  Repeating this argument
  gives that $f_i = x^{\gamma_+ -i\alpha_++(i-1)\alpha_-}$ for $i=1,\ldots,s$.
  Since $f_sx^{\alpha_-} = x^{\gamma_-}$, we have $\gamma_+-s\alpha_++s\alpha_- =
  \gamma_-$.  Consequently, $\gamma = \alpha+\beta = s\alpha$, i.e.,
  $\beta = (s-1)\alpha$, contradicting our assumption on linearly independence.
\end{proof}

The next lemma can be viewed as giving a criterion
for ideal membership in a binomial ideal generated by two elements.
\begin{lemma}
  \label{lem:tech}
  Let $\alpha, \beta \in \Z^s$ be two linearly
  independent vectors with positive and negative entries such that $\gamma = \alpha + \beta$ also has positive and negative entries.
  Then $x^{\gamma_+} - x^{\gamma_-} \in \langle x^{\alpha_+} - x^{\alpha_-}, x^{\beta_+} - x^{\beta_-} \rangle$ if and only if $\supp(\alpha_+) \cap \supp(\beta_-) = \emptyset$ or $\supp(\alpha_-) \cap \supp(\beta_+) = \emptyset$.\end{lemma}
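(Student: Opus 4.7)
For the ``if'' direction I would proceed by symmetry and assume without loss of generality that $\supp(\alpha_+) \cap \supp(\beta_-) = \emptyset$. The idea is to absorb the overlap between $\alpha_-$ and $\beta_+$: set $\delta \coloneqq \min(\alpha_-, \beta_+)$ componentwise, and write $\alpha_- = \delta + \alpha'_-$, $\beta_+ = \delta + \beta'_+$, so that $\alpha'_-, \beta'_+ \in \N^s$ have disjoint supports. A coordinate-by-coordinate check combining this construction with the standing hypothesis and the tautological disjointness $\supp(\alpha_+) \cap \supp(\alpha_-) = \supp(\beta_+) \cap \supp(\beta_-) = \emptyset$ yields $\gamma_+ = \alpha_+ + \beta'_+$ and $\gamma_- = \alpha'_- + \beta_-$. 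The explicit identity
\[
  x^{\beta'_+}(x^{\alpha_+} - x^{\alpha_-}) + x^{\alpha'_-}(x^{\beta_+} - x^{\beta_-}) = x^{\gamma_+} - x^{\gamma_-}
\]
then follows by expansion, because the two cross-terms $-x^{\alpha_- + \beta'_+}$ and $+x^{\alpha'_- + \beta_+}$ both equal $x^{\delta + \alpha'_- + \beta'_+}$ and cancel.

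For the ``only if'' direction I would argue the contrapositive: assume both intersections are nonempty and fix $i \in \supp(\alpha_+) \cap \supp(\beta_-)$ and $j \in \supp(\alpha_-) \cap \supp(\beta_+)$ (necessarily $i \neq j$). The strategy is to construct a surjection of $R$ that annihilates both generators of $I$ but not the candidate binomial. Consider the quotient $\phi \colon R \twoheadrightarrow R/(x_i, x_j)$: by the choice of $i, j$, each of $x^{\alpha_+}, x^{\alpha_-}, x^{\beta_+}, x^{\beta_-}$ contains $x_i$ or $x_j$ and is killed by $\phi$, so $\phi(I) = 0$. Thus membership $x^{\gamma_+} - x^{\gamma_-} \in I$ would force $\phi(x^{\gamma_+}) = \phi(x^{\gamma_-})$ in $R/(x_i, x_j)$; a short case analysis on how $\{i,j\}$ meets $\supp(\gamma_+)$ and $\supp(\gamma_-)$ produces a contradiction in every configuration except the \emph{tight} one in which exactly one of $i, j$ lies in $\supp(\gamma_+)$ and the other in $\supp(\gamma_-)$.

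To dispense with the residual tight sub-case I would exploit the freedom to re-choose $i, j$: varying them over the two intersection sets yields a non-tight pair unless every $i \in \supp(\alpha_+) \cap \supp(\beta_-)$ satisfies $\gamma_i > 0$ and every $j \in \supp(\alpha_-) \cap \supp(\beta_+)$ satisfies $\gamma_j < 0$ (or the symmetric configuration). In that extremal case I pivot to the single-variable substitution $x_i \mapsto 0$, under which $\phi(I)$ becomes the monomial ideal $\langle x^{\alpha_-}, x^{\beta_+} \rangle$ and the problem reduces to $x^{\gamma_-} \notin \langle x^{\alpha_-}, x^{\beta_+} \rangle$: divisibility by $x^{\alpha_-}$ fails at indices of $\supp(\alpha_-) \cap \supp(\beta_+)$, and divisibility by $x^{\beta_+}$ forces the nested-support configuration $\supp(\beta_+) \subseteq \supp(\alpha_-)$ with tightly doubled exponents. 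The symmetric substitution $x_j \mapsto 0$ handles the mirror failure, and the final configuration in which $\supp(\beta) \subseteq \supp(\alpha)$ with tight exponent matching on both sides reduces to a two-variable question about $\langle x^a - y^b,\, x^d - y^c \rangle$, which is eliminated via a parametrization of the monomial curve $x^a = y^b$ together with a numerical-semigroup obstruction ruled out by the linear-independence hypothesis $ac \neq bd$. The main obstacle is precisely this residual extremal sub-case, where the linear independence of $\alpha$ and $\beta$ -- unused in the ``if'' direction -- becomes indispensable.
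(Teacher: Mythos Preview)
Your ``if'' direction is correct and coincides with the paper's: your $\beta'_+$ and $\alpha'_-$ are precisely $(\beta_+-\alpha_-)_+$ and $(\beta_+-\alpha_-)_-$, which is how the paper packages the same identity. For the ``only if'' direction, the quotient by $(x_i,x_j)$ and the single-variable substitutions $x_i\mapsto 0$, $x_j\mapsto 0$ are a clean alternative to the paper's support-chasing, and they do bring you to the same intermediate configuration the paper reaches, namely $\supp(\beta_-)\subseteq\supp(\alpha_+)$, $\supp(\beta_+)\subseteq\supp(\alpha_-)$ (in fact $\gamma_+=\alpha_+-\beta_-$ and $\gamma_-=\alpha_--\beta_+$).

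The gap is your final step. The passage to ``a two-variable question about $\langle x^a-y^b,\,x^d-y^c\rangle$'' via the obvious map $R\to\K[x,y]$ does not preserve linear independence: you can have $\alpha,\beta$ independent in $\Z^s$ while the images $(a,-b),(-d,c)$ are dependent, i.e.\ $ac=bd$, so the claimed numerical obstruction is simply unavailable. Concretely, take
\[
\alpha=(2,2,-4),\qquad \beta=(-1,0,1),\qquad \gamma=(1,2,-3).
\]
Here both intersections are nonempty, your extremal condition holds ($\gamma_1>0$, $\gamma_3<0$), both single-variable substitutions are survived ($x^{\beta_+}\mid x^{\gamma_-}$ and $x^{\beta_-}\mid x^{\gamma_+}$), and indeed $\supp(\beta)\subseteq\supp(\alpha)$ with $2\beta_\pm\le\alpha_\mp$. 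Under $x_1,x_2\mapsto x$, $x_3\mapsto y$ one gets $a=b=4$, $c=d=1$, so $ac=bd$, and the image $x^3-y^3$ lies in $\langle x^4-y^4,\,y-x\rangle=\langle y-x\rangle$; yet $x_1x_2^2-x_3^3\notin\langle x_1^2x_2^2-x_3^4,\,x_3-x_1\rangle$ (reduce modulo $x_3-x_1$: one would need $x_1(x_2^2-x_1^2)\in\langle x_1^2(x_2^2-x_1^2)\rangle$). So the two-variable reduction detects nothing in exactly the residual case you need it for.

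The paper closes this last case not by collapsing variables but by staying in $R$ and iterating on the membership identity: knowing $\gamma_\pm=\alpha_\pm-\beta_\mp$, one peels off the forced terms $-x^{\gamma_+-\beta_-}-x^{\gamma_--\beta_+}$ from $g$, observes the resulting monomials are again divisible only by $x^{\beta_\pm}$ or by $x^{\alpha_\pm}$, and repeats until divisibility by $x^{\alpha_+}$ (resp.\ $x^{\alpha_-}$) is forced; this yields integers $k,\ell\ge 1$ with $(k+1)\beta_+\le\alpha_-\le\ell\beta_+$ and $(\ell+1)\beta_-\le\alpha_+\le k\beta_-$, hence $k+1\le\ell$ and $\ell+1\le k$, a contradiction. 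Your framework reaches the right launching point for this iteration, but the two-variable shortcut does not replace it.
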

\begin{proof}
  We show the implication ``$\Rightarrow$'' by contradiction, i.e., we assume $x^{\gamma_+} - x^{\gamma_-}$ is contained in $\langle x^{\alpha_+} - x^{\alpha_-}, x^{\beta_+} - x^{\beta_-} \rangle$ and both $\supp(\alpha_+) \cap \supp(\beta_-) \neq \emptyset$ and $\supp(\alpha_-) \cap \supp(\beta_+) \neq \emptyset$.
  The binomial $x^{\gamma_+} - x^{\gamma_-}$ is contained in $\langle x^{\alpha_+} - x^{\alpha_-}, x^{\beta_+} - x^{\beta_-} \rangle$ if and only if
  \begin{equation}
    \label{eq:binom}
    x^{\gamma_+} - x^{\gamma_-} = f \cdot (x^{\alpha_+} - x^{\alpha_-}) + g \cdot (x^{\beta_+} - x^{\beta_-}) \text{,}
  \end{equation}
  for some non-zero polynomials $f, g \in \K[x_1, \ldots, x_s]$ by Lemma
  \ref{lem:tech1}. It follows that one of the monomials $x^{\alpha_+}$, $x^{\alpha_-}$, $x^{\beta_+}$, $x^{\beta_-}$ divides $x^{\gamma_+}$, respectively $x^{\gamma_-}$.
  
  Note that neither $x^{\alpha_+}$ nor $x^{\beta_+}$ divide $x^{\gamma_+}$. To see why, suppose $j \in \supp(\alpha_+) \cap \supp(\beta_-)$.
    Then $x_j^d$ appears in the monomial $x^{\alpha_+}$ and
    $x_j^e$ appears in the monomial $x^{\beta_-}$ for some integers
    $d,e \geq 1$.  The $j$-th coordinate of $\gamma$ is then $d-e$.
    If $d-e \geq 1$, then $x_j^{d-e}$ appears in the monomial
    $x^{\gamma_+}$, and so $x^{\alpha_+}$ cannot divide this monomial.
    If $d-e \leq 0$, then no power of $x_j$ appears in $x^{\gamma_+}$,
    and so again, $x^{\alpha_+}$ does not divide $x^{\gamma_+}$.  A similar
    argument holds for $x^{\beta_+}$. So, up to swapping $\alpha$ and $\beta$, we may assume that $x^{\beta_-}$ divides $x^{\gamma_+}$.

  Since $\supp(\gamma_+) \subseteq \supp(\alpha_+) \cup \supp(\beta_+)$, we obtain $\supp(\beta_-) \subseteq \supp(\alpha_+)$ and $\supp(\gamma_-) \subseteq \supp(\alpha_-)$ using the fact that $\supp(\alpha_+) \cap \supp(\alpha_-) = \emptyset$,
    and similarly for $\beta_+$ and $\beta_-$.
We conclude the preparatory observations by noting that neither $x^{\alpha_-}$, $x^{\beta_-}$, nor $x^{\alpha_+}$ divide $x^{\gamma_-}$, so that $x^{\beta+}$ must divide $x^{\gamma_-}$.
  To summarize, $\supp(\beta_-) \subseteq \supp(\gamma_+) \subseteq \supp(\alpha_+)$ and $\supp(\beta_+) \subseteq \supp(\gamma_-) \subseteq \supp(\alpha_-)$.  

We now claim that $\gamma_+ = \alpha_+ - \beta_-$ and $\gamma_- = \alpha_- - \beta_+$.
For the first equality, observe that there are three ways for $\gamma$ to have a positive value
in the $j$-th coordinate: $(1)$ the $j$-th coordinates of $\alpha$ and $\beta$ are both non-negative and at least one coordinate is positive, $(2)$ the $j$-th coordinate of $\alpha$, say $a_j$,  is positive,
and the $j$-th coordinate of $\beta$, say $b_j$, is negative, but $a_j + b_j > 0$, or $(3)$ the $j$-th coordinate of $\beta$, say $b_j$,  is positive, and the $j$-th coordinate of $\alpha$, say $a_j$, is negative, but $b_j + a_j > 0$.
  However, as $\supp(\beta_+) \subseteq \supp(\gamma_-)$ and $\supp(\beta_+) \cap \supp(\alpha_+) = \emptyset$, case $(1)$ can only happen if the $j$-th coordinate of $\alpha$ is positive and the one of $\beta$ vanishes.
  Case $(3)$ is impossible since this implies that $j \in \supp(\beta_+) \subseteq \supp(\gamma_-)$ and $j \in \supp(\gamma_+)$.
  This leaves case $(2)$, so that we can conclude $\gamma_+ = \alpha_+ - \beta_-$.  The second equality is proved similarly.

As $x^{\alpha_+}$ and $x^{\alpha_-}$ do not divide $x^{\gamma_+}$ and $x^{\gamma_-}$ respectively, we have $x^{\gamma_+}$ (resp.~$x^{\gamma_-}$) is a multiple of $x^{\beta_-}$ (resp.~$x^{\beta_+}$), i.e.,  $g = g' - x^{\gamma_+-\beta_-} - x^{\gamma_--\beta_+}$ for some $g' \in \K[x_1, \ldots, x_s]$, so that \eqref{eq:binom} becomes:
  \begin{equation}
    \label{eq:simplified}
    -f \cdot (x^{\alpha_+} - x^{\alpha_-}) = g' \cdot (x^{\beta_+}-x^{\beta_-}) + x^{\gamma_--\beta_+}\cdot x^{\beta_-}-x^{\gamma_+-\beta_-}\cdot x^{\beta_+} \text{.}
  \end{equation}
  Note that, $x^{\alpha_-} \nmid x^{\gamma_--\beta_++\beta_-}=x^{\gamma_--\beta}$.
  If $x^{\alpha_+} \nmid x^{\gamma_--\beta}$, then $x^{\gamma_--\beta}$ must be cancelled by a multiple of $x^{\beta_+}$, i.e., $g' = g''-x^{\gamma_--2\beta_++\beta_-}$ for some $g'' \in \K[x_1, \ldots, x_s]$.
  We obtain:
  \[
    -f \cdot (x^{\alpha_+} - x^{\alpha_-}) = g''\cdot (x^{\beta_+}-x^{\beta_-}) + x^{\gamma_--2\beta_++2\beta_-}-x^{\gamma_+-\beta_-}\cdot x^{\beta_+}\text{.}
  \]
  Again, $x^{\alpha_-} \nmid x^{\gamma_--2\beta_++2\beta_-}$, so that, if $x^{\alpha_+} \nmid x^{\gamma_--2\beta_++2\beta_-}$, we can repeat the same step again.
  This process must eventually stop, and we obtain that $x^{\alpha_+} \mid x^{\gamma_--k\beta_++k\beta_-}$ for some positive integer $k$.
  Then $k\beta_+ \le \gamma_- = \alpha_--\beta_+$ and $\alpha_+\le k\beta_-$ (where the inequalities are meant coordinate-wise).
  
  If we go back to equation \eqref{eq:simplified}, and repeat the same reasoning for the monomial $x^{\gamma_+-\beta_-+\beta_+}$, we obtain that $x^{\alpha_-} \mid x^{\gamma_+-\ell\beta_-+\ell \beta_+}$ for some positive integer $\ell$, and thus $\ell\beta_-\le \gamma_+=\alpha_+-\beta_-$ and $\alpha_-\le\ell\beta_+$.
  Summarizing, we obtain:
  \[
    (k+1)\beta_+\le \alpha_-\le \ell\beta_+ \qquad \text{and} \qquad (\ell+1)\beta_- \le \alpha_+\le k \beta_- \text{.}
  \]
  Hence $k+1 \le \ell$ and $\ell+1 \le k$. A contradiction.
      
  For the converse implication ``$\Leftarrow$'', assume that $\supp(\alpha_+) \cap \supp(\beta_-) = \emptyset$ (the other case works similarly). If $\delta =\beta_+ - \alpha_- \in \Z^s$, and thus, $\delta_++ \alpha_- = \delta_- + \beta_+$, then
  \[
    x^{\alpha_++\mathbf{\delta}_+} - x^{\beta_-+\delta_-} = x^{\delta_+} \cdot \left( x^{\alpha_+} - x^{\alpha_-} \right) + x^{\delta_-} \cdot \left( x^{\beta_+} - x^{\beta_-} \right) \in \langle x^{\alpha_+}-x^{\alpha_-}, x^{\beta_+} - x^{\beta_-} \rangle \text{.}
  \]
  It remains to show that the left side of this equation coincides with $x^{\gamma_+} - x^{\gamma_-}$. Note $\supp(\alpha_+ + \delta_+) \subseteq \supp(\alpha_+) \cup \supp(\beta_+)$ and $\supp(\beta_-+\delta_-) \subseteq \supp(\beta_-) \cup \supp(\alpha_-)$. As $\supp(\alpha_+) \cap \supp(\beta_-) = \emptyset$, the support of $\alpha_+$ is disjoint from $\supp(\beta_-) \cup \supp(\delta_-)$. From this it straightforwardly follows that the supports of $\alpha_++\delta_+$ and $\beta_-+\delta_-$ are disjoint. The statement follows by the observation that $\alpha_++\delta_+-(\beta_-+\delta_-) = \alpha+\beta = \gamma$.
\end{proof}

If in Lemma \ref{lem:tech} the ideal is replaced by its saturation with respect to the monomial $x_1 \cdots x_s$, then the assumption on the supports can be dropped.

\begin{example}
  Let $\alpha = \mathbf{e}_1 + \mathbf{e}_2 - \mathbf{e}_4 - \mathbf{e}_5$ and $\beta = \mathbf{e}_4 + \mathbf{e}_5 - \mathbf{e}_2 - \mathbf{e}_3$ in $\Z^5$ such that $\gamma = \alpha+\beta = \mathbf{e}_1 - \mathbf{e}_3$. Note that the assumption on the supports of Lemma \ref{lem:tech} is not satisfied and that $x^{\gamma_+} - x^{\gamma-} \not \in I \coloneqq \langle x^{\alpha_+} - x^{\alpha_-}, x^{\beta_+} - x^{\beta_-}\rangle$. However, $x^{\gamma_+} - x^{\gamma_-}$ is contained in the saturation $I : (x_1 \cdots x_5)^\infty$.
\end{example}

The next lemma gives us a criterion for when a toric ideal $I_A$ is splittable.

\begin{lemma}\label{lem:lattice-ideal}
  Let $A_1, \ldots, A_k$ be matrices with integer entries of dimensions $n_i \times s_i$ ($i=1,\ldots, k$) and let $c_1, \ldots, c_l \in \Z^N$ with $N \ge n_1 + \cdots + n_k$. Consider the matrix
  \[
    A = \begin{bsmallmatrix}
      A_1 &&& \smash{\rule[-9pt]{.5pt}{12pt}} & &\smash{\rule[-9pt]{.5pt}{12pt}} \\
      &\raisebox{0pt}{$\scalebox{.75}{$\ddots$}$} && \raisebox{0pt}{$\scriptstyle c_1$} & \raisebox{0pt}{$\scriptstyle \ldots$} & \raisebox{0pt}{$\scriptstyle c_l$} \\
      &&A_k&\smash{\rule[-7pt]{.5pt}{12pt}} & & \smash{\rule[-7pt]{.5pt}{12pt}} \\
      &0&& & &
    \end{bsmallmatrix}\in \Z^{N \times (s_1 + \cdots + s_k + l)}\text{.}
  \]
  Let $U_i$ be the set of indices of the columns in which $A_i$ is located in the matrix $A$. Suppose $\ker(A) = \ker(A_1) \oplus \ldots \oplus \ker(A_k) \oplus \Z \tau$ for some $\tau \in \Z^{s_1+\cdots+s_k+l}$. If for all $i \in \{1,\ldots,k\}$, the set $U_i$ is disjoint from either $\supp(\tau_+)$ or $\supp(\tau_-)$, then
  \[
    I_{A} = I_{A_1} + \cdots + I_{A_k} + \langle x^{\tau_+} - x^{\tau_-}\rangle \text{.}
  \]
\end{lemma}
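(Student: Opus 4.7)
The plan is to prove both inclusions separately; the harder direction proceeds by a finite induction that ``builds'' an arbitrary $\alpha \in \ker(A)$ out of $\tau$ and the $\alpha^{(i)}$'s, invoking Lemma~\ref{lem:tech} at each step.

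The inclusion $I_{A_1}+\cdots+I_{A_k}+\langle x^{\tau_+}-x^{\tau_-}\rangle \subseteq I_A$ is routine. Any $\beta^{(i)}\in\ker(A_i)$, viewed in $\Z^{s_1+\cdots+s_k+l}$ by extension by zero outside $U_i$, is annihilated by $A$: the columns of $A$ indexed by $U_i$ reproduce $A_i$ on the $n_i$ ``own'' rows and vanish on all remaining rows (the $c_j$-columns are not touched). Hence $I_{A_i}\subseteq I_A$, and $x^{\tau_+}-x^{\tau_-}\in I_A$ since $\tau\in\ker(A)$ by hypothesis.

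For the reverse inclusion, fix $\alpha\in\ker(A)$ and use the direct-sum decomposition to write $\alpha=\alpha^{(1)}+\cdots+\alpha^{(k)}+m\tau$, where $\alpha^{(i)}\in\ker(A_i)$ is supported in $U_i$ and $m\in\Z$. Replacing $\tau$ by $-\tau$ if necessary alters neither the hypothesis (symmetric in $\tau_+$ and $\tau_-$) nor the ideal $\langle x^{\tau_+}-x^{\tau_-}\rangle$, so we may assume $m\geq 0$. Set $\alpha_j:=m\tau+\alpha^{(1)}+\cdots+\alpha^{(j)}$ for $0\leq j\leq k$, so $\alpha_0=m\tau$ and $\alpha_k=\alpha$. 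I would prove by induction on $j$ that $x^{\alpha_{j+}}-x^{\alpha_{j-}}\in J:=I_{A_1}+\cdots+I_{A_k}+\langle x^{\tau_+}-x^{\tau_-}\rangle$. The base case $j=0$ follows from the factorization
\[
(x^{\tau_+})^m-(x^{\tau_-})^m=(x^{\tau_+}-x^{\tau_-})\sum_{i=0}^{m-1}(x^{\tau_+})^{m-1-i}(x^{\tau_-})^i.
\]

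For the inductive step, apply Lemma~\ref{lem:tech} to the pair $(\alpha_j,\alpha^{(j+1)})$, whose sum is $\alpha_{j+1}$. The key observation is that $\alpha^{(i)}$ vanishes on $U_{j+1}$ for $i\leq j$ (the $U_i$'s being pairwise disjoint), so $\alpha_j|_{U_{j+1}}=m\tau|_{U_{j+1}}$. By hypothesis, $U_{j+1}$ is disjoint from $\supp(\tau_+)$ or from $\supp(\tau_-)$, so $m\tau|_{U_{j+1}}$ is entirely $\leq 0$ or entirely $\geq 0$. In the first case $\supp(\alpha_{j+})\cap U_{j+1}=\emptyset$, and since $\supp(\alpha^{(j+1)}_-)\subseteq U_{j+1}$, we obtain $\supp(\alpha_{j+})\cap\supp(\alpha^{(j+1)}_-)=\emptyset$; the second case is symmetric. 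Either way the support hypothesis of Lemma~\ref{lem:tech} is satisfied, and the lemma yields
\[
x^{\alpha_{(j+1)+}}-x^{\alpha_{(j+1)-}}\in\langle x^{\alpha_{j+}}-x^{\alpha_{j-}},\,x^{\alpha^{(j+1)}_+}-x^{\alpha^{(j+1)}_-}\rangle\subseteq J,
\]
using the inductive hypothesis and $I_{A_{j+1}}\subseteq J$.

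The principal obstacle is verifying the remaining non-degeneracy assumptions of Lemma~\ref{lem:tech}, namely linear independence and the presence of both positive and negative entries in each of $\alpha_j$, $\alpha^{(j+1)}$, and $\alpha_{j+1}$. Linear independence is clean: any proportionality between $\alpha_j$ and $\alpha^{(j+1)}$ would force $\tau$ to lie in some $\ker(A_i)$, contradicting the directness of the decomposition $\ker(A)=\bigoplus_i\ker(A_i)\oplus\Z\tau$. The genuinely degenerate cases---$\alpha^{(j+1)}=0$ (tautological step), $\alpha_j=0$ (conclude directly from $I_{A_{j+1}}\subseteq J$), or a one-signed summand---either collapse into an immediate argument or simply do not arise in the multi-graded settings pursued in Section~\ref{s. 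Splittings of Toric Ideals of Graphs}, where every nonzero lattice vector automatically has entries of both signs.
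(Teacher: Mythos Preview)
Your argument is correct and follows essentially the same route as the paper: both proofs establish the nontrivial inclusion by induction, at each step writing the partial sum as (previous partial sum) $+$ (one $\ker(A_i)$ piece) and invoking the ``$\Leftarrow$'' direction of Lemma~\ref{lem:tech}, with the support hypothesis coming from the condition on $U_i$ versus $\supp(\tau_\pm)$; the only cosmetic difference is that the paper peels off $\beta^k$ from the full sum (induction on $k$) whereas you build up from $m\tau$ (induction on $j$). Regarding the ``principal obstacle'' you flag: the paper does not address these degeneracies either, and in fact the ``$\Leftarrow$'' direction of Lemma~\ref{lem:tech} goes through verbatim without the linear-independence or mixed-sign hypotheses (inspect the proof---those assumptions are only used for ``$\Rightarrow$''), so your last paragraph is unnecessary.
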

\begin{proof}
  As $I_{A} = \langle x^{\gamma_+} - x^{\gamma_-} ~|~ \gamma \in \ker(A) \rangle$,
  the inclusion ``$\supseteq$'' is clear. To prove the
  reverse inclusion, let $\gamma = \beta^1 + \cdots + \beta^k + c\tau
  \in \ker(A)$ where $\beta^i \in \ker(A_i)$ and
  $c \in \Z$.
  
  We do induction on $k$. The base case $k=0$ is straightforward.
  If $k>0$, then we set
  $\beta \coloneqq \beta^1+ \ldots + \beta^{k-1} + c\tau$. Note that our
  assumptions imply that $\supp(\beta_+)$ or $\supp(\beta_-)$ is disjoint
  from $\supp(\beta^k)$. By Lemma \ref{lem:tech}, $x^{\gamma_+} - x^{\gamma_-} \in \langle x^{\beta_+}-x^{\beta_-},
  x^{\beta^k_+} - x^{\beta^k_-}\rangle$ and we conclude the proof by the
  induction hypothesis, that is, $x^{\beta_+} - x^{\beta_-} \in
  I_{A_1} + \cdots + I_{A_{k-1}} + \langle x^{\tau_+} - x^{\tau_-} \rangle$.
\end{proof}

Note that in order to apply Lemma \ref{lem:lattice-ideal}, it might be necessary to choose a suitable basis, so that the matrix $A\in \mathbb{Z}^{n \times s}$ has the appropriate shape. However, when we restrict to toric ideals of graphs, Lemma \ref{lem:lattice-ideal} holds for some graph constructions (see Theorem \ref{t. Splitting 1}).

When a matrix $A$ that satisfies conditions
of Lemma \ref{lem:lattice-ideal} also induces a multi-grading, Lemma \ref{lem:lattice-ideal} has implications for the multi-graded Betti numbers.

\begin{theorem}\label{bettiformula}
  With the notation and hypotheses of Lemma \ref{lem:lattice-ideal}, suppose that in addition the matrix $A$ induces an $\N^N$-grading on $R/I_A$. Then for all $i \geq 0$ and $\alpha \in \N^N$,
  \[
    \beta_{i,\alpha}(R/I_A) = \beta_{i,\alpha}(R/J) + \beta_{i-1,\alpha-\mu}(R/J)
  \]
  where $J = I_{A_1}+\cdots +I_{A_k}$ and $\mu = \deg{(x^{\tau_+}-x^{\tau_-})} \in \N^N$.
\end{theorem}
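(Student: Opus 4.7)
The plan is to reduce the statement to a standard long-exact-sequence computation. Let $f = x^{\tau_+} - x^{\tau_-}$; since $A\tau = 0$, the monomials $x^{\tau_+}$ and $x^{\tau_-}$ have the same multidegree $\mu \in \N^N$, so $f$ is multi-homogeneous of degree $\mu$, and by Lemma \ref{lem:lattice-ideal} we have $I_A = J + \langle f\rangle$. Multiplication by $f$ then gives a right-exact sequence of multigraded modules
\[
(R/J)(-\mu) \xrightarrow{\cdot f} R/J \to R/I_A \to 0,
\]
and the entire theorem reduces to showing this sequence is actually short exact (equivalently, that $f$ is a nonzerodivisor on $R/J$) and extracting the Betti numbers from the associated $\mathrm{Tor}^R_*(-,\K)$ long exact sequence.

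For the nonzerodivisor step, I would first show that $R/J$ is an integral domain. By Lemma \ref{lem:disjoint-splitting}, $J$ is the sum of the toric ideals $I_{A_i}$, which involve pairwise disjoint sets of variables, so
\[
R/J \cong \bigl(R_1/I_{A_1}\bigr) \otimes_{\K} \cdots \otimes_{\K} \bigl(R_k/I_{A_k}\bigr) \otimes_{\K} \K[y_1, \ldots, y_l],
\]
where $y_1, \ldots, y_l$ are the variables attached to the extra columns $c_1, \ldots, c_l$. Each factor $R_i/I_{A_i}$ is a domain because toric ideals are prime, and since $\K$ is algebraically closed of characteristic zero, the tensor product of these $\K$-domains is again a domain. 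Next I would verify $f \notin J$: the binomial lattice of $J$ is $\ker(A_1) \oplus \cdots \oplus \ker(A_k) \oplus \{0\}^l$ inside $\Z^{s_1+\cdots+s_k+l}$, and the hypothesis $\ker(A) = \ker(A_1) \oplus \cdots \oplus \ker(A_k) \oplus \Z\tau$ places $\tau$ outside this sublattice. Hence $f$ is a nonzero element of the domain $R/J$, so it is a nonzerodivisor and the displayed sequence is short exact.

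With the short exact sequence in hand, the long exact sequence of $\mathrm{Tor}^R_*(-,\K)$ in multi-degree $\alpha$ has connecting pieces
\[
\mathrm{Tor}^R_i(R/J,\K)_{\alpha-\mu} \xrightarrow{\cdot f} \mathrm{Tor}^R_i(R/J,\K)_\alpha,
\]
and these maps are zero: $\mu \neq 0$ (this is automatic because the $\N^N$-grading assumption forces $\ker(A) \cap \N^{s_1+\cdots+s_k+l} = \{0\}$, so $\tau_+ \neq 0$), hence $f$ lies in the irrelevant maximal ideal $\mathfrak{m}$, and multiplication by elements of $\mathfrak{m}$ annihilates $\mathrm{Tor}^R(-,\K)$. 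The long exact sequence therefore decomposes into short exact sequences
\[
0 \to \mathrm{Tor}^R_i(R/J,\K)_\alpha \to \mathrm{Tor}^R_i(R/I_A,\K)_\alpha \to \mathrm{Tor}^R_{i-1}(R/J,\K)_{\alpha-\mu} \to 0,
\]
and taking $\K$-dimensions gives the asserted additive formula. The main technical point is the nonzerodivisor argument (and, within it, the verification that $R/J$ is a domain via the tensor product description); the vanishing of the connecting maps and the reading-off of dimensions are entirely standard.
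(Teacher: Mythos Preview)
Your proposal is correct and follows essentially the same route as the paper: set up the short exact sequence $0 \to (R/J)(-\mu) \xrightarrow{\cdot f} R/J \to R/I_A \to 0$, verify $f$ is a nonzerodivisor on $R/J$, and read off the Betti numbers. The only cosmetic differences are that the paper observes directly that $J$ is a toric ideal (via Lemma~\ref{lem:disjoint-splitting}) and hence prime, rather than invoking the tensor-product-of-domains argument, and it phrases the final step as a mapping cone producing a minimal resolution rather than as vanishing connecting maps in the $\mathrm{Tor}$ long exact sequence; these are equivalent formulations of the same computation.
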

\begin{proof}
  By Lemma \ref{lem:lattice-ideal}, we have $I_A = J + \langle x^{\tau_+} - x^{\tau_-}\rangle$, and furthermore, this ideal is $\N^s$-graded. Set $F = x^{\tau_+} - x^{\tau_-}$. We then have a multi-graded short exact sequence of $R$-modules
  \[
    0 \longrightarrow \left(R/(J:\langle F\rangle) \right)(-\mu) \xrightarrow{\times F} R/J \longrightarrow R/(J+\langle F \rangle) = R/I_A \longrightarrow 0 \text{.}
  \]
  The ideal $J$ is a toric ideal by Lemma \ref{lem:disjoint-splitting}, and consequently, it is prime. Since $F \not\in J$, it then follows that $J:\langle F \rangle = J$. So we can rewrite the short exact sequence above as
  \begin{equation}
    \label{e:ses}
    0 \longrightarrow (R/J)(-\mu) \xrightarrow{\times F} R/J \longrightarrow R/I_A \longrightarrow 0 \text{.}
  \end{equation}
  Let $(\mathcal{H},d)$ denote the multi-graded minimal free resolution of $R/J$. Then the multi-graded minimal free resolution $(\mathcal{G},d')$ of $(R/J)(-\mu)$ is the same except all the free $R$-modules in $\mathcal{H}$ will have their grading twisted by $\mu$. Hence the map $\times F\colon (R/J)(-\mu) \to R/J$ lifts to a map of complexes $\phi\colon (\mathcal{G},d') \to (\mathcal{H},d)$ where $\phi_i\colon G_i \to H_i$ is the map $\phi_i$ that takes each basis element of $G_i$ and multiplies it by $F$.

  The mapping cone construction applied to \eqref{e:ses}, gives a minimal
  multi-graded free resolution of $R/I_A$. Indeed, the resolution produced by the mapping cone construction is minimal if all maps $\phi_i$ can be represented by matrices where none of the non-zero entries of the matrices are constants. In our case, all the non-zero entries are $F$. The multi-graded Betti numbers in the statement now follow from our minimal multi-graded free resolution.
\end{proof}

\begin{remark}
  The multi-graded Betti numbers of $R/J$ can be computed by
  Theorem \ref{kunneth}. Hence, under the hypotheses Theorem \ref{bettiformula},
  the multi-graded Betti numbers of $I_A$ only depend on
  the Betti numbers of the ideals in the splitting of $I_A$.
\end{remark}

If we specialize our results to toric ideals of graphs,
Lemma \ref{lem:lattice-ideal} allows us to find splittings of $I_G$ in
terms of graph theoretic constructions. In particular, the technical
hypotheses of Lemma \ref{lem:lattice-ideal} correspond to a graph theoretic construction of taking a large even cycle, and joining (mostly bipartite) graphs in a prescribed manner to this cycle.

\begin{theorem}\label{t. Splitting 1}
  Let $G_1, \ldots, G_k$ be finite simple connected graphs with at most one $G_i$ not being bipartite. 
  Let $C$ be an even cycle with at least $k$ edges.
  For each $i$, identify an edge of $G_i$ 
  with a distinct edge of $C$ (see Figure \ref{fig:combined}). Then the toric ideal $I$ of the resulting graph is given by
  \[
    I = I_{G_1} + \cdots + I_{G_k} + \langle f \rangle\text{,}
  \]
  where $I_{G_i}$ is the toric ideal of $G_i$ and $f$ is the binomial corresponding to the even cycle $C$.
\end{theorem}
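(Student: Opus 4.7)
The plan is to apply Lemma \ref{lem:lattice-ideal} to the incidence matrix of $H$. The first step is to arrange the incidence matrix $A$ of $H$ in the block form prescribed by that lemma: since the $G_i$'s are pairwise vertex-disjoint, order the vertices of $H$ as $V(G_1), V(G_2), \ldots, V(G_k)$ followed by the internal vertices of $C$ (those not lying on any $G_i$), and order the edges as $E(G_1), \ldots, E(G_k)$ followed by the non-shared edges of $C$. With this ordering the diagonal blocks are exactly the incidence matrices $A_{G_i}$ (with the edge of $G_i$ shared with $C$ appearing as a column of its own $A_{G_i}$-block), the extra columns on the right correspond to the non-shared $C$-edges, and the block below each $A_{G_i}$ is zero because no edge of $G_i$ is incident to an internal $C$-vertex. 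Choose $\tau \in \Z^{|E(H)|}$ with alternating entries $\pm 1$ along the $2d$ edges of $C$ (and zero elsewhere), so that $x^{\tau_+}-x^{\tau_-} = f$.

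The support condition of Lemma \ref{lem:lattice-ideal} is immediate: each $U_i$ meets $\supp(\tau)$ only in the single column for the edge of $G_i$ shared with $C$, so exactly one of $U_i \cap \supp(\tau_+)$, $U_i \cap \supp(\tau_-)$ is empty. The most substantial task is to verify $\ker(A) = \ker(A_{G_1}) \oplus \cdots \oplus \ker(A_{G_k}) \oplus \Z\tau$ as lattices. The inclusion ``$\supseteq$'' and the directness of the sum are clear (the $\ker(A_{G_i})$'s have pairwise disjoint supports inside the $U_i$, and $\Z\tau$ is the only summand with support on the extra columns). For the reverse inclusion, take $\gamma \in \ker(A)$: at each internal $C$-vertex, $A\gamma = 0$ forces $\gamma$ on its two incident non-shared $C$-edges to have opposite values, so propagating along each of the $k$ non-shared arcs $P_j$ of $C$ (the segments between consecutive shared edges) gives $\gamma|_{P_j} = c_j\,\tau|_{P_j}$ for some integer $c_j$.

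The crux, and the main obstacle, is to show $c_1 = \cdots = c_k$; once this is done, subtracting that common multiple of $\tau$ leaves an element of $\ker(A)$ supported entirely on $\bigsqcup_i E(G_i)$, which by the block-diagonal structure splits as a sum of elements in the $\ker(A_{G_i})$'s, and Lemma \ref{lem:lattice-ideal} then yields $I_H = I_{G_1}+\cdots+I_{G_k}+\langle f\rangle$. To prove the coincidence, bipartiteness enters as follows: if $G_j$ is bipartite with $V(G_j) = X_j \sqcup Y_j$, summing the equations $A\gamma = 0$ separately over $X_j$ and over $Y_j$ counts each edge of $G_j$ once on each side, while the right-hand sides vanish except at the two endpoints of $G_j$'s shared edge (which lie in different colour classes of $G_j$); equating the two colour sums forces the magnitudes on the two $C$-arcs flanking $G_j$ to agree, that is, $c_{j-1} = c_j$. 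Because at most one $G_j$ fails to be bipartite, these equalities chain around $C$ and force all $c_j$ to coincide; two or more non-bipartite components would break the chain and allow lattice elements of $\ker(A)$ outside the claimed decomposition, which is exactly why the hypothesis is sharp.
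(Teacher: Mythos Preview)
Your argument is correct and follows the same overall architecture as the paper---set up the incidence matrix in block form and invoke Lemma~\ref{lem:lattice-ideal}---but your verification of the lattice identity $\ker(A)=\ker(A_{G_1})\oplus\cdots\oplus\ker(A_{G_k})\oplus\Z\tau$ is genuinely different from the paper's. The paper proceeds indirectly: it computes $\rank(\ker(A))$ via Theorem~\ref{prop:bipartite-dim} (the bipartite/non-bipartite dimension formula), checks it matches $\sum_i\rank(\ker(A_{G_i}))+1$, and then runs a separate saturation argument to upgrade rank equality to lattice equality. You instead give a direct element-by-element decomposition: propagate along the degree-$2$ internal $C$-vertices to get a constant $c_j$ on each arc, then use the two-colouring of each bipartite $G_j$ to show $c_{j-1}=c_j$, and chain these around $C$. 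Your route is more elementary (it avoids both Theorem~\ref{prop:bipartite-dim} and the abstract saturation step) and makes the role of the bipartiteness hypothesis completely explicit---one sees immediately that a second non-bipartite $G_j$ would sever the chain of equalities $c_{j-1}=c_j$ and permit extra kernel elements. The paper's route, on the other hand, packages the bipartite/non-bipartite dichotomy into a single rank count and is arguably more systematic if one already has the dimension formula available. Both arguments tacitly assume the $k$ distinguished edges of $C$ are pairwise non-adjacent (so that the $G_i$'s remain vertex-disjoint in $H$ and each arc $P_j$ is non-empty); the paper's vertex count $n_1+\cdots+n_k+l-k$ relies on this just as your block-diagonal setup does.
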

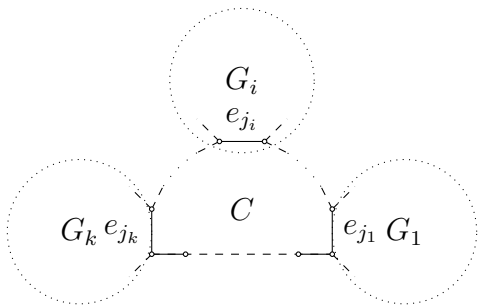
\begin{figure}[!ht]
  \centering
  \begin{tikzpicture}[scale=0.3]
    \fill[white] (0,6) circle (3cm);
    \fill[white] (-6,0) circle (3cm);
    \fill[white] (6,0) circle (3cm);

    \draw[dotted] (0,6.7) circle (3.2cm) node{$G_i$};
    \draw[dotted] (-7.2,0) circle (3.2cm) node{$G_k$};
    \draw[dotted] (7.2,0) circle (3.2cm) node{$G_1$};
    
    \draw (-4,1) -- (-4,-1) node[midway,left] {$e_{j_k}$};
    \draw (4,1) -- (4,-1) node[midway, right] {$e_{j_1}$};
    \draw (-1,4) -- (1,4) node[midway,above] {$e_{j_i}$};
    \draw (-4,-1) -- (-2.5,-1);
    \draw (2.5,-1) -- (4,-1);
    \draw[loosely dotted] (3.5,2) edge[bend right=10] (2,3.5);
    \draw[dashed] (4,1) -- (3.5,2);
    \draw[dashed] (1,4) -- (2,3.5);
    \draw[dashed] (-1,4) -- (-2,3.5);
    \draw[dashed] (-4,1) -- (-3.5,2);
    \draw[loosely dotted] (-2,3.5) edge[bend right=10] (-3.5,2);
    \draw[dashed] (-4,-1) -- (4,-1);
    \draw[thin,dashed] (-4,1) -- (-5,2);
    \draw[thin,dashed] (-4,-1) -- (-5,-2);
    \draw[thin,dashed] (4,1) -- (5,2);
    \draw[thin,dashed] (4,-1) -- (5,-2);
    \draw[thin,dashed] (-1,4) -- (-2,5);
    \draw[thin,dashed] (1,4) -- (2,5);
    \node at (0,1) {$C$};
    \fill[fill=white,draw=black] (-4,1) circle (.1);
    \fill[fill=white,draw=black] (-4,-1) circle (.1);
    \fill[fill=white,draw=black] (-2.5,-1) circle (.1);
    \fill[fill=white,draw=black] (4,1) circle (.1);
    \fill[fill=white,draw=black] (4,-1) circle (.1);
    \fill[fill=white,draw=black] (2.5,-1) circle (.1);
    \fill[fill=white,draw=black] (-1,4) circle (.1);
    \fill[fill=white,draw=black] (1,4) circle (.1);
  \end{tikzpicture}
  \caption{Connecting graphs $G_1, \ldots, G_k$ along $k$ edges of an even    cycle $C$.}
  \label{fig:combined}
\end{figure}
\begin{proof}
  Let $A_i$ be the incidence matrix of $G_i$, i.e., $A_i$ is an $n_i \times s_i$ matrix where $n_i=|V(G_{i})|$ and $s_i=|E(G_{i})|$. Note that $\rank(A_i) \in \{ n_i, n_i - 1 \}$ with at most one matrix having rank $n_i$ (if $G_i$ is not bipartite) by Theorem \ref{prop:bipartite-dim}. Let $l$ be the number of additional edges, so that the resulting graph has $n_1 + \cdots + n_k + l-k$ vertices and $s \coloneqq s_1 + \cdots +s_k+l$ edges. Let $B$ be the $(n_1+\cdots+n_k+l-k)\times(s_1+\cdots+s_k+l)$ incidence matrix of the resulting graph $G$ whose shape coincides with the shape of the matrix in Lemma \ref{lem:lattice-ideal} where the block-diagonal part is built from the matrices $A_i$ and the additional $l$ columns correspond to the additional edges. It is straightforward to verify that the even cycle $C$ induces an element $\tau$ in the kernel of $B$, so that $L \coloneqq \ker(A_1) \oplus \ldots \oplus \ker(A_k) \oplus \Z \tau \subseteq \ker(B)$.

  Next, we determine the rank of $\ker(B)$. We distinguish two cases: If all $G_i$ are bipartite, then $G$ is also bipartite, and thus by Theorem \ref{prop:bipartite-dim}
  \begin{align*}
    \rank(\ker(B)) &= s - \rank(B) = s_1 + \cdots + s_k + l - (n_1 + \cdots + n_k + l - k - 1) \\
    &= \left(s_1 - \left(n_1 - 1\right)\right) + \cdots + \left(s_k - \left(n_k - 1\right)\right) + 1 \text{.}
  \end{align*}
  Similarly, if say $G_1$ is not bipartite, then $G$ is not bipartite, and thus by Theorem \ref{prop:bipartite-dim}
  \begin{align*}
    \rank(\ker(B)) &= s - \rank(B) = s_1 + \cdots + s_k + l - (n_1 + \cdots + n_k+l-k)\\
    &= \left(s_1 - n_1\right) + \left( s_2 - \left(n_2-1\right)\right) + \cdots + \left(s_k-\left(n_k-1\right)\right) + 1 \text{.}
  \end{align*}
  We conclude that in either case $\rank(\ker(B)) = \rank(\ker(A_1)) + \ldots + \rank(\ker(A_k)) + 1$. However, to show the equality $L = \ker(B)$, it remains to show that $L$ is saturated in $\ker(B)$. If $\alpha \in \ker(B)$ such that $k \alpha \in L$ for some integer $k$, then $k\alpha = \beta + u \tau$ for some $\beta \in \ker(A_1) \oplus \ldots \oplus \ker(A_k)\eqqcolon L'$ and some integer $u$. As $L' \subseteq \Z^{s_1+\ldots+s_k} \times \{0\}^l$ and $\tau$ has an entry ``$\pm1$'' in its last $l$ coordinates, we can conclude that $k$ divides $u$, say $u = k u'$, so that $\beta = k(\alpha - u' \tau)$. As $L'$ is the kernel of the matrix obtained from $B$ by replacing the last $l$ columns by $0$-columns, it follows that $L'$ is saturated in $\Z^{s_1+\ldots+s_k+l}$, and thus $\beta' \coloneqq \alpha - u' \tau \in L'$. Hence $\alpha = \beta' + u' \tau$ is contained in $L$ which concludes the proof that $L$ is saturated in $\ker(B)$, and therefore the two lattices coincide. If $U_i$ is as in Lemma \ref{lem:lattice-ideal}, then, since $|\supp(\mathbf{\tau}) \cap U_i| = 1$, the result follows by Lemma \ref{lem:lattice-ideal}.
\end{proof}

\begin{remark}
  Note that Theorem \ref{t. Splitting 1} is independent of the edge we pick in each $G_i$. 
\end{remark}

We end this section by recording some consequences for the graphs of Theorem \ref{t. Splitting 1}.

\begin{theorem}\label{gradedbettinumbersgraphs}
  Let $G_1, \ldots, G_k$ be finite simple connected graphs with at most one $G_i$ not being bipartite. Let $G$ be the graph constructed as in Theorem \ref{t. Splitting 1}. If the even cycle $C$ has size $2d$, then
  \[
    \beta_{i,j}(R/I_G) = \beta_{i,j}(R/J) + \beta_{i-1,j-d}(R/J) ~~\mbox{for
    all $i,j \geq 0$}
  \]
  where $J = I_{G_1}+\cdots+I_{G_k}$ with each $I_{G_i}$ viewed as an ideal of $R$.
\end{theorem}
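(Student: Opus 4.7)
The plan is to reduce to the multi-graded formula from Theorem \ref{bettiformula} and then collapse the multi-grading to the standard grading via \eqref{toricidealgrading}. First I would observe that the matrix constructed in the proof of Theorem \ref{t. Splitting 1} is the incidence matrix $B$ of $G$, which puts the multi-graded Betti numbers of $R/I_G$ within reach: every column of $B$ has entry-sum $2$, so $I_G$ is $\N^{N}$-graded (with $N=n_1+\cdots+n_k+l-k$) and the standard grading is related to the multi-grading by $\beta_{i,j}(R/I_G)=\sum_{|\alpha|=2j}\beta_{i,\alpha}(R/I_G)$, and similarly for $R/J$.

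Next I would identify the twist $\mu$. By Theorem \ref{t. Splitting 1}, $I_G = J+\langle f\rangle$ where $f$ is the binomial $f=\prod_{2\nmid j}e_{i_j}-\prod_{2\mid j}e_{i_j}$ corresponding to the even cycle $C$ of length $2d$. Thus in the standard grading $f$ has degree $d$, and in the multi-grading its degree $\mu \in \N^{N}$ satisfies $|\mu|=2d$ (since each column of $B$ sums to $2$ and exactly $d$ edge variables appear in each term of $f$). Theorem \ref{bettiformula}, whose hypotheses are exactly those verified in the proof of Theorem \ref{t. Splitting 1}, then yields
\[
\beta_{i,\alpha}(R/I_G)=\beta_{i,\alpha}(R/J)+\beta_{i-1,\alpha-\mu}(R/J)\qquad\text{for all }i\ge 0,\ \alpha\in\N^{N}.
\]

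Finally I would sum the multi-graded identity over all $\alpha$ with $|\alpha|=2j$:
\[
\beta_{i,j}(R/I_G)=\sum_{|\alpha|=2j}\beta_{i,\alpha}(R/J)+\sum_{|\alpha|=2j}\beta_{i-1,\alpha-\mu}(R/J).
\]
The first sum equals $\beta_{i,j}(R/J)$ by the multi-to-standard formula applied to $J$. For the second, the substitution $\alpha'=\alpha-\mu$ yields $|\alpha'|=2j-2d=2(j-d)$, so this sum equals $\beta_{i-1,j-d}(R/J)$, completing the proof. (For $j<d$ the sum is empty and contributes $0$, consistent with $\beta_{i-1,j-d}(R/J)=0$ under the usual convention.)

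The main point to be careful about, rather than a genuine obstacle, is that the multi-grading on $R$ used when one studies $J$ as a sum of toric ideals of the $G_i$'s (via Lemma \ref{lem:disjoint-splitting} and Theorem \ref{kunneth}) must match the $\N^{N}$-grading coming from the incidence matrix $B$ of $G$; this is precisely why Theorem \ref{bettiformula} is applied in the ambient ring $R=\K[E(G)]$ with the grading given by $B$, so that both $R/J$ and $R/I_G$ live in the same multi-graded category and the identity $\beta_{i,j}=\sum_{|\alpha|=2j}\beta_{i,\alpha}$ holds uniformly.
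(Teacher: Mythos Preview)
Your proposal is correct and follows essentially the same route as the paper's proof: invoke Theorem \ref{bettiformula} (whose hypotheses are verified in Theorem \ref{t. Splitting 1}) to obtain the multi-graded identity, observe that the binomial $f$ of the even $2d$-cycle has standard degree $d$ (equivalently $|\mu|=2d$), and then collapse to the standard grading via \eqref{toricidealgrading}. The paper compresses this into two sentences, whereas you spell out the summation over $|\alpha|=2j$ and the substitution $\alpha'=\alpha-\mu$ explicitly, but the content is the same.
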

\begin{proof}
  The standard grading of $R/I_G$ is compatible with the multi-grading of $R/I_G$ given by the incidence matrix of $G$. Now combine Theorem \ref{bettiformula} with equation \eqref{toricidealgrading}, after  noting that the generator $f$ of Theorem \ref{t. Splitting 1} has $\deg(f) =d$ (in the standard grading).
\end{proof}

\begin{remark}
  By applying Theorem \ref{kunneth}, we also have a formula for the graded Betti numbers of $R/J$ in Theorem \ref{gradedbettinumbersgraphs}. In particular, if $J = I_{G_1}+\cdots +I_{G_k}$, we have
  \[
    \beta_{i,j}(R/J) = \sum_{\substack{i_1 + \cdots + i_k = i \\ i \geq 0}} \sum_{\substack{j_1 + \cdots + j_k = j \\ j \geq 0}} \beta_{i_1, j_1}(R/I_{G_1}) \cdots \beta_{i_k, j_k}(R/I_{G_k}) \text{.}
  \]
\end{remark}

We record some consequences for the homological invariants. Let $I$ be a homogeneous ideal in the standard graded polynomial ring $S = \K[x_1,\ldots,x_n]$. The {\em Hilbert series} of a standard graded $\mathbb{K}$-algebra $S/I$ is the formal power series
\[
  HS_{S/I}(t) = \sum_{i \geq 0} \left[\dim_\K (S/I)_i\right]t^i.
\]
By the Hilbert-Serre Theorem (e.g., \cite[Theorem 5.1.4]{V}) there is an $h_{S/I}(t) \in \Z[t]$ such that
\[
  HS_{S/I}(t) = \frac{h_{S/I}(t)}{(1-t)^{\dim(R/I)}} ~~\mbox{with $h_{S/I}(1) \neq 0$} \text{.}
\]
The polynomial $h_{S/I}(t)$ is the {\em $h$-polynomial} of $S/I$. The {\em (Castelnuovo-Mumford) regularity} is
\[
  \reg(S/I) = \max\{j-i ~|~ \beta_{i,j}(S/I) \neq 0\} \text{.}
\]
The {\em projective dimension} of $S/I$ is the length of the graded minimal free resolution, that is
\[
  \pdim(S/I) = \max\{i ~|~ \beta_{i,j}(S/I) \neq 0\} \text{.}
\]
We now have:
\begin{corollary}
  Let $G_1, \ldots, G_k$ be finite simple connected graphs with at most one $G_i$ not being bipartite. Let $G$ be the graph constructed as in Theorem \ref{t. Splitting 1}. If the even cycle $C$ has size $2d$, then
  \begin{enumerate}[label=(\roman*)]
  \item $h_{R/I_G}(t) = \frac{(1-t^d)}{(1-t)} \cdot \prod_{i=1}^k h_{R_i/I_{G_i}}(t)$ where $R_i = \K[E(G_{i})]$;
  \item $\reg(R/I_G) = \reg(R/I_{G_1}) + \cdots + \reg(R/I_{G_k}) + (d-1)$;
  \item $\pdim(R/I_G) = \pdim(R/I_{G_1}) + \cdots + \pdim(R/I_{G_k}) + 1$.
  \end{enumerate}
\end{corollary}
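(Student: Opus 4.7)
The plan is to derive all three statements from Theorem \ref{gradedbettinumbersgraphs}, together with the Künneth-type decomposition of $R/J$ used in Theorem \ref{kunneth}. Decompose the edge set as $E(G) = E(G_1) \sqcup \cdots \sqcup E(G_k) \sqcup E_0$, where $E_0$ consists of the $2d-k$ edges of $C$ that are not identified with any edge of a $G_i$. Since $I_{G_i}$ only involves variables indexed by $E(G_i)$, we obtain
\[
  R/J \;\cong\; R_1/I_{G_1}\otimes_{\K}\cdots\otimes_{\K}R_k/I_{G_k}\otimes_{\K}\K[E_0].
\]
Consequently, Betti numbers of $R/J$ are given by a tensor product as in Theorem \ref{kunneth} (the polynomial factor $\K[E_0]$ contributing only $\beta_{0,0}=1$), and so
\[
  \reg(R/J)=\sum_{i=1}^k \reg(R_i/I_{G_i}), \qquad \pdim(R/J)=\sum_{i=1}^k \pdim(R_i/I_{G_i}), \qquad HS_{R/J}(t)=\tfrac{1}{(1-t)^{2d-k}}\prod_{i=1}^k HS_{R_i/I_{G_i}}(t).
\]

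Parts (ii) and (iii) then follow directly from the identity $\beta_{i,j}(R/I_G)=\beta_{i,j}(R/J)+\beta_{i-1,j-d}(R/J)$ of Theorem \ref{gradedbettinumbersgraphs}. For (ii), any nonzero summand on the right satisfies $j-i\le\reg(R/J)$ or $j-i=(j-d)-(i-1)+(d-1)\le\reg(R/J)+(d-1)$; conversely, shifting any extremal pair $(i',j')$ for $R/J$ by $(1,d)$ produces a nonzero Betti number of $R/I_G$ realizing $\reg(R/J)+(d-1)$. For (iii), the same reasoning on the index $i$ yields $\pdim(R/I_G)=\pdim(R/J)+1$. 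Combining with the additive decompositions above gives the claimed formulas.

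For (i), specialize the short exact sequence
\[
  0 \longrightarrow (R/J)(-d) \xrightarrow{\;\times f\;} R/J \longrightarrow R/I_G \longrightarrow 0
\]
from the proof of Theorem \ref{bettiformula} to the standard grading, in which $\deg f = d$. This gives $HS_{R/I_G}(t) = (1-t^d)\,HS_{R/J}(t)$. Substituting the product formula for $HS_{R/J}$ and equating with the Hilbert–Serre expression $h_{R/I_G}(t)/(1-t)^{\dim(R/I_G)}$, the statement (i) reduces to the dimension identity
\[
  \dim(R/I_G) \;=\; \sum_{i=1}^k \dim(R_i/I_{G_i}) + (2d-k-1),
\]
which is verified by Theorem \ref{prop:bipartite-dim} after a short case analysis.

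The main obstacle is precisely this dimension bookkeeping, which splits according to whether all $G_i$ are bipartite (in which case $G$ is bipartite) or exactly one, say $G_1$, is not (in which case $G$ is not bipartite). A direct count using $|V(G)| = 2d + \sum_i(n_i-2)$ shows that both cases yield the same identity: the non-bipartite case loses one unit in $\sum_i \dim(R_i/I_{G_i})$ (since $\dim(R_1/I_{G_1})=n_1$ rather than $n_1-1$) but gains one unit in $\dim(R/I_G)$, so the two effects cancel and the factor $(1-t^d)/(1-t)$ emerges from a single excess power of $(1-t)$ after all cancellations.
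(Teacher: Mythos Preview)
Your proof is correct and follows the same overall strategy as the paper: both rely on the tensor decomposition of $R/J$ and on the short exact sequence $0 \to (R/J)(-d) \to R/J \to R/I_G \to 0$ from Theorem~\ref{bettiformula}. The only notable difference is in part~(i): instead of your explicit dimension count via Theorem~\ref{prop:bipartite-dim} and the bipartite/non-bipartite case split, the paper simply notes that $h_{R/J}(t)=h_{S/J}(t)$ (adjoining the free variables in $E_0$ leaves the $h$-polynomial unchanged) and uses implicitly that $f$ is a nonzerodivisor on the domain $R/J$, whence $\dim(R/I_G)=\dim(R/J)-1$; this bypasses the case analysis entirely and is a cleaner route to the same conclusion. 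For (ii) and (iii) the two arguments are equivalent, since the Betti identity you invoke from Theorem~\ref{gradedbettinumbersgraphs} is itself a consequence of the minimality of the mapping cone that the paper cites directly. (A small wording slip in your final paragraph: in the non-bipartite case both $\sum_i \dim(R_i/I_{G_i})$ and $\dim(R/I_G)$ \emph{gain} one unit, not lose; the conclusion that the effects cancel is of course still correct.)
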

\begin{proof}
  Set $R_i = \K[E(G_{i})]$. Let $J = I_{G_1}+\cdots+I_{G_k}$, where we view each $I_{G_i}$ as an ideal of $S = 
  \K[E(G_{1})\cup \cdots \cup E(G_k)]$. Then
  \[
    S/J \cong R_1/I_{G_1} \otimes_\K R_2/I_{G_2} \otimes_\mathbb{K} \cdots \otimes_\K R_k/I_{G_k} \text{.}
  \]
  By tensoring the resolutions of each $R_i/I_{G_i}$ to construct a resolution of $S/J$ we get:
  \begin{multline*}
    h_{S/J}(t) = \prod_{i=1}^k h_{R_i/I_{G_i}}(t), ~~\reg(S/J) = \sum_{i=1}^k \reg(R_i/I_{G_i}), \;\mbox{and}\\
    \pdim(S/J) = \sum_{i=1}^k \pdim(R_i/I_{G_i}) \text{.}
  \end{multline*}
  
  Now view $J$ as an ideal of $R = \K[E(G)]$.  That is, $R$ is obtained by adjoining the variables to
  $S$ that correspond to the edges
  of $C$ that do not appear in $G_1,\ldots,G_k$.
  As shown in the proof of Theorem \ref{bettiformula}, we have a short exact sequence
  \[
    0 \to (R/J)(-d) \xrightarrow{\times f} R/J \to R/I_G \to 0 \text{,}
  \]
  where $f$ is the degree $d$
  binomial that 
  corresponds to the even cycle
  $C$.
  For statement $(i)$, the Hilbert series are additive on short exact sequences. So $HS_{R/I_{G}}(t) = HS_{R/J}(t)-t^dHS_{R/J}(t) = (1-t^d)HS_{R/J}(t)$.
  Since $h_{R/J}(t) = h_{S/J}(t)$, the numerator of the reduced
  Hilbert series for $R/I_G$ is $\frac{(1 - t^d)}{(1-t)} \cdot \prod_{i=1}^k h_{R_i/I_{G_i}}(t)$.
  Statements $(ii)$ and $(iii)$ are consequences of the fact that the mapping cone construction on this short exact sequence produces a minimal graded free resolution, and the fact that the regularity and the projective
  dimension of $I_{G_i}$ and $I_G$ remain the same when we view them as ideals
  in the ring $R$.
\end{proof}

We end with a special case of Theorem \ref{gradedbettinumbersgraphs} which justifies the 
example in the introduction.

\begin{corollary}\label{c. firstglue}
  Let $G$ be any finite simple graph. Fix an edge $e$ in $G$, and connect a new even cycle of length $2d \geq 4$ along $e$ (see Figure \ref{fig:onecycle}). If $H$ is the resulting graph, then
  \[
    \beta_{i,j}(R/I_H) = \beta_{i,j}(R/I_G) + \beta_{i,j-d}(R/I_G)\quad\mbox{for all} \; i,j \geq 0 \text{.}
  \]
\end{corollary}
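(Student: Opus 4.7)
The plan is to deduce this corollary as the single-graph instance ($k=1$) of Theorem \ref{gradedbettinumbersgraphs}. The construction described here — take a graph $G$, pick an edge $e$, and identify $e$ with an edge of a new even cycle $C$ of length $2d$ to form $H$ — is precisely the construction of Theorem \ref{t. Splitting 1} in the degenerate case $k=1$, $G_1 = G$.

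First I would verify that the hypotheses of Theorem \ref{gradedbettinumbersgraphs} (and hence of Theorem \ref{t. Splitting 1}) are satisfied. The condition ``at most one $G_i$ is not bipartite'' is vacuously satisfied when $k=1$, so $G$ is allowed to be an arbitrary finite simple graph, in agreement with the statement of the corollary. The cycle $C$ has $2d \geq 4 > k = 1$ edges, so the gluing construction is well-defined. Theorem \ref{t. Splitting 1} then gives the toric splitting $I_H = I_G + \langle f \rangle$, where $f$ is the binomial attached to $C$, and in the standard grading $\deg f = d$ since $C$ has $2d$ edges and the incidence grading is compatible with the standard grading by \eqref{toricidealgrading}.

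Next I apply Theorem \ref{gradedbettinumbersgraphs} with $J = I_{G_1} = I_G$. The theorem's recursion specializes directly to the desired identity, where the $d$-shift in the second index comes from the degree of $f$. A small bookkeeping point to check is that viewing $I_G$ inside the larger ring $R = \mathbb{K}[E(H)]$ (which contains the edge variables of $C$ that are not edges of $G$) does not alter the graded Betti numbers: this is because $R$ is a polynomial extension of $\mathbb{K}[E(G)]$ by free variables, so the minimal free resolution of $R/I_G$ over $R$ is obtained from the one over $\mathbb{K}[E(G)]$ by base change, preserving all $\beta_{i,j}$.

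No real obstacle arises here; the only purpose of this corollary is to record the prototypical ``gluing an even cycle along an edge'' example from the introduction as an immediate specialization of the main Betti-number recursion. The entire proof is therefore the one-line observation that the hypotheses hold with $k=1$, together with the identification $J = I_G$.
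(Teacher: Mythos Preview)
Your reduction to the $k=1$ case of Theorem~\ref{gradedbettinumbersgraphs} is exactly the paper's intended (and unstated) argument: with $G_1=G$ and $J=I_G$, the recursion specializes immediately.

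One small caveat worth flagging: Theorem~\ref{gradedbettinumbersgraphs} (via Theorem~\ref{t. Splitting 1}) assumes each $G_i$ is \emph{connected}, not merely ``at most one non-bipartite'', and you do not check this. Since the corollary allows an arbitrary finite simple graph, your verification of the hypotheses is incomplete as written; the disconnected case needs a word (components of $G$ not containing $e$ contribute identically to both sides, so one may reduce to the connected component containing $e$). You might also observe that Theorem~\ref{gradedbettinumbersgraphs} literally produces $\beta_{i-1,j-d}(R/I_G)$ in the second summand, whereas the corollary prints $\beta_{i,j-d}(R/I_G)$; the mapping-cone argument in Theorem~\ref{bettiformula} forces the homological shift, so this discrepancy is a typo in the stated corollary rather than a defect in your reasoning.
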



\section{Other splittings for toric ideals of graphs}
\label{s. Splittings of Toric Ideals of Graphs}

In this section we give another splitting of a 
toric ideal of a graph.  The starting point of 
our approach is the
observation that  Corollary \ref{c. firstglue} implies that if we ``glue" an even 
cycle onto the edge of a graph to make a new graph $G$, then
$I_G$ is the sum of the toric ideals
of ``glued" graphs, that is, $I_G$ is splittable.  The notion of
a ``gluing" also appears in \cite[Proposition 7.49 and Theorem 7.50]{Binomi}
where the authors show how some properties of the toric 
ideals of graphs are preserved for a certain class of graphs after ``gluing" the graphs at one vertex. 

We formalize the notion of gluing, and a corresponding inverse
operation, which we call a splitting. Note that 
variations of this construction have appeared in the literature (e.g., Koh and Teo \cite{KT} describes a gluing along
a complete graph); other examples undoubtedly exist.
For our constructions we require
induced subgraphs.  Given a graph $G = (V(G),E(G))$ and
$W \subseteq V(G)$, the \emph{induced subgraph} of $G$
on $W$ is the graph $H$ with $V(H)=W$ and $E(H)=\{e \in E(G) ~|~ e \subseteq W\}$.

\begin{construction}
  Let $G_1, G_2$ be two graphs and suppose that
  $H_1 \subseteq G_1, H_2\subseteq G_2$ are two induced subgraphs
  which are isomorphic with respect to some graph isomorphism
  $\varphi \colon H_1 \to H_2$. We define the \emph{glued graph}
  $G_1 \cup_\varphi G_2$ of $G_1$ and $G_2$ along $\varphi$ as the
  disjoint union of $G_1$ and $G_2$, and then using $\varphi$ to
  identify associated vertices and edges.  At times, we may be more
  informal and say that  \emph{$G_1$ and $G_2$ is glued along $H$} if
  the induced subgraphs $H \cong H_1$ and $H \cong H_2$ and
  isomorphism $\varphi$ are clear.
\end{construction}
\begin{construction}
  Let $G = (V(G), E(G))$ be a finite simple graph. Suppose there are
  two subsets $W_1,W_2 \subseteq V(G)$ whose union gives $V(G)$, and
  denote the induced subgraph with vertex set $W_i$ by $G_i$ for $i = 1, 2$.
  Let $Y = W_1 \cap W_2$ and denote the corresponding induced subgraph by
  $H$. We say that \emph{$G_1$ and $G_2$ form a splitting of $G$ along $H$} if
  the graph obtained by removing the vertices $Y$ from $G$ yields two
  disconnected pieces.
\end{construction}

The two constructions given above are inverses of each other in
the following sense.  If $G$ is the glued graph of $G_1$ and $G_2$
along $\varphi$, then $G_1$ and $G_2$ form a splitting of $G$ along $H$
where we identify $G_i$ with the corresponding induced subgraph in $G$ and
where $H$ is the induced subgraph of $G$ corresponding to $H_i$. Inversely,
if $G$ is a finite graph and $G_1, G_2$ are two induced subgraphs which
form a splitting of $G$ along some common induced subgraph $H \subseteq G_i$,
then $G$ can be obtained from $G_1$ and $G_2$ as the corresponding glued graph.

\begin{remark} Using the analogy of direct products of groups, note that
  a gluing of graphs is similar to an external direct products of groups in
  the sense that the glued graph is constructed from two given graphs.  On
  the other hand, we can view a splitting of a graph as similar to an
  internal direct product in that we are decomposing the graph in terms
  of subgraphs.  Depending upon the context, one point-of-view may be
  preferable.
  \end{remark}

Different choices of the isomorphism $\varphi$ can result in non-isomorphic glued graphs.
 
\begin{example}\label{e.gluing} 
  Let $G_1 = G_2$ be the graph in Figure \ref{f. gluing edges1}. Consider the edge $H_1 = H_2 = \{ x_1, x_2 \}$.
  \begin{figure}[!ht]
    \centering
    \hfill
    \begin{tikzpicture}[scale=.36]
      \node (f1) at (-2.9,1.5) {$G_1 = G_2 = $};
      \node[xshift=1pt] (f2) at (1.5,-1.5) {\small $H_1=H_2$};
      \coordinate (v1) at (0,0);
      \coordinate (v2) at (3,0);
      \coordinate (v3) at (3,3);
      \coordinate (v4) at (0,3);
      \coordinate (v5) at (5,1.5);

      \draw [very thick] (v1)--(v2); 
      \draw (v2)--(v3);
      \draw (v3)--(v4);
      \draw (v4)--(v1);
      \draw (v2)--(v5);
      \draw (v3)--(v5);

      \fill[fill=white,draw =black] (v1) circle (.1) node [below left] {$x_1$};
      \fill[fill=white,draw =black] (v2) circle (.1) node [below right]{$x_2$};
      \fill[fill=white,draw =black] (v3) circle (.1);
      \fill[fill=white,draw =black] (v4) circle (.1);
      \fill[fill=white,draw =black] (v5) circle (.1);
    \end{tikzpicture}\hfill\hfill
    \caption{The graph from Example \ref{e.gluing}.}
    \label{f. gluing edges1} 
  \end{figure}
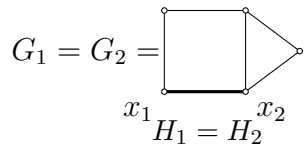
  The two possible choices of isomorphisms $\varphi \colon H_1 \to H_2$
  (depending on whether we flip the edge or not) yield non-isomorphic
  glued graphs (see Figure \ref{f. gluing edges2}). Indeed, one graph
  contains a vertex of degree five while the degree of any vertex in
  the other graph is at most four.
  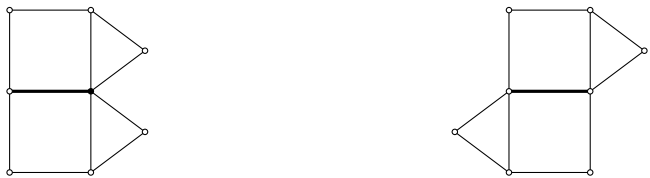
\begin{figure}[!ht]
    \centering
    \hfill
    \begin{tikzpicture}[scale=0.36]
      \coordinate (v1) at (0,0);
      \coordinate (v2) at (3,0);
      \coordinate (v3) at (3,3);
      \coordinate (v4) at (0,3);
      \coordinate (v5) at (5,1.5);
      \coordinate (v6) at (0,-3);
      \coordinate (v7) at (3,-3);
      \coordinate (v8) at (5,-1.5);
	
      \draw[very thick] (v1)--(v2);
      \draw (v2)--(v3);
      \draw (v3)--(v4);
      \draw (v4)--(v1);
      \draw (v2)--(v5);
      \draw (v3)--(v5);
      \draw (v1)--(v6);
      \draw (v2)--(v7);
      \draw (v6)--(v7);
      \draw (v8)--(v2);
      \draw (v8)--(v7);

      \fill[fill=white,draw =black] (v1) circle (0.1);
      \fill[fill=black,draw =black] (v2) circle (0.1);
      \fill[fill=white,draw =black] (v3) circle (0.1);
      \fill[fill=white,draw =black] (v4) circle (0.1);
      \fill[fill=white,draw =black] (v5) circle (0.1);
      \fill[fill=white,draw =black] (v6) circle (0.1);
      \fill[fill=white,draw =black] (v7) circle (0.1);
      \fill[fill=white,draw =black] (v8) circle (0.1);
    \end{tikzpicture}\hfill
    \begin{tikzpicture}[scale=0.36]
      \coordinate (v1) at (0,0);
      \coordinate (v2) at (3,0);
      \coordinate (v3) at (3,3);
      \coordinate (v4) at (0,3);
      \coordinate (v5) at (5,1.5);
      \coordinate (v6) at (0,-3);
      \coordinate (v7) at (3,-3);
      \coordinate (v8) at (-2,-1.5);

      \draw[very thick] (v1)--(v2);
      \draw (v2)--(v3);
      \draw (v3)--(v4);
      \draw (v4)--(v1);
      \draw (v2)--(v5);
      \draw (v3)--(v5);
      \draw (v1)--(v6);
      \draw (v2)--(v7);
      \draw (v6)--(v7);
      \draw (v8)--(v1);
      \draw (v8)--(v6);	

      \fill[fill=white,draw =black] (v1) circle (0.1);
      \fill[fill=white,draw =black] (v2) circle (0.1);
      \fill[fill=white,draw =black] (v3) circle (0.1);
      \fill[fill=white,draw =black] (v4) circle (0.1);
      \fill[fill=white,draw =black] (v5) circle (0.1);
      \fill[fill=white,draw =black] (v6) circle (0.1);
      \fill[fill=white,draw =black] (v7) circle (0.1);
      \fill[fill=white,draw =black] (v8) circle (0.1);
    \end{tikzpicture}\hfill\hfill
    \caption{Different ways to glue graphs along one edge.}
    \label{f. gluing edges2}
  \end{figure}
\end{example}

Although the gluing of $G_1$ and $G_2$ depends
upon the isomorphism $\varphi$, in some cases the toric
ideal of the glued graph is independent of $\varphi$.
Specifically, if at least one graph
is bipartite, and if we glue along a particular type of subgraph,
then the toric ideal of the glued graph is almost splittable (i.e.,
splittable up to a saturation with respect to a particular element).

\begin{theorem}\label{thm:glue-Gs-along-path}
  Let $G_1$ and $G_2$ be a splitting of a graph $G$ along a path graph $P_l
  \cong H \subseteq G$ such that any vertex of $H$ distinct from
  the endpoints considered as a vertex inside $G$ has degree $2$.
  If $G_1$ is bipartite, then we obtain.
  \[
    I_G = \left( I_{G_1} + I_{G_2} \right) : f^\infty \text{,}
  \]
  where $f$ denotes the square-free monomial corresponding
  to the edges in $H$ with even indices.
\end{theorem}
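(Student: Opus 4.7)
The plan is to verify both inclusions of the claimed equality. For the easy direction $(I_{G_1}+I_{G_2}):f^\infty \subseteq I_G$, I note that $G_1$ and $G_2$ are subgraphs of $G$, so every closed even walk in $G_i$ is also a closed even walk in $G$, whence $I_{G_i} \subseteq I_G$ and $I_{G_1}+I_{G_2} \subseteq I_G$. Since $I_G$ is a toric ideal it is prime, and since it is generated by binomials it contains no nonzero monomial, so $f \notin I_G$. Therefore $I_G : f^\infty = I_G$, and monotonicity of saturation yields the containment.

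For the reverse inclusion I would apply Theorem \ref{thm:univ-grb} and reduce to showing that for every primitive closed even walk $w$ in $G$ there exists $k \geq 0$ with $f^k f_w \in I_{G_1}+I_{G_2}$. If $w$ lies entirely in one of the $G_i$, the claim is immediate with $k=0$. Otherwise, the degree-$2$ hypothesis on the interior vertices of $H$ forces any sub-walk of $w$ entering the interior of $H$ to continue along $H$, so transitions between $E(G_1)\setminus E(H)$ and $E(G_2)\setminus E(H)$ occur only at the endpoints $u, v$ of $H$. This lets me decompose $w$ canonically into maximal sub-walks $\sigma_1, \ldots, \sigma_N$ separated by visits to $\{u, v\}$, each sub-walk entirely in $G_1$ or in $G_2$ and running between vertices of $\{u, v\}$. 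I would then close each $\sigma_j$ into a bona fide closed even walk in the corresponding $G_i$ by inserting suitable traversals of $H$, and combine the resulting binomials using the telescoping identity
\[
  f_{w_1 w_2} \;=\; x^{(w_1)_+} f_{w_2} \;+\; x^{(w_2)_-} f_{w_1},
\]
valid for any concatenation $w_1 w_2$ of closed even walks. The bipartiteness of $G_1$ plays a crucial role here: any walk in $G_1$ between $u$ and $v$ has length $\equiv l \pmod{2}$, because the bipartition of $G_1$ restricts to the natural bipartition of the sub-path $H$, so a $G_1$-sub-walk can be closed with a single traversal of $H$ of matching parity.

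The main obstacle I anticipate is producing the tight saturation by $f$ alone, that is, by the product of only the even-indexed $H$-edges rather than by the product of all $H$-edges. A direct insertion of $H$-traversals would contribute both odd- and even-indexed edges to the binomial, and so would naively yield only saturation by $\prod_{i=1}^l h_i$. To cut down to $f$, one must exploit cancellations among the odd-indexed contributions arising from a coordinated use of generators of $I_{G_1}$ and $I_{G_2}$, which is where the bipartiteness of $G_1$ is used most essentially. The power $k$ will be dictated by the structure of the primitive walk: for crossing-type even cycles a single factor of $f$ typically suffices, while for dumbbell-type primitive walks (two odd cycles in $G_2$ joined by a path segment traversing $H$) $k$ matches the squared appearance of the even $H$-edges in the corresponding generator of $I_{G_2}$, reflecting the two round-trip traversals of $H$ needed to close up the configuration.
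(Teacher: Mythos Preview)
Your outline matches the paper's strategy: both prove the easy inclusion via primeness of $I_G$ and attack the hard inclusion by decomposing a primitive closed even walk at the endpoints of $H$. However, the step you yourself flag as the main obstacle---reducing to saturation by $f$ rather than by the full product $\prod_i h_i$---is left unresolved, and your plan to close \emph{every} sub-walk $\sigma_j$ with an $H$-traversal has a further difficulty: a $G_2$-sub-walk between the endpoints $u,v$ carries no parity constraint (since $G_2$ need not be bipartite), so concatenating it with copies of $H$ need not yield an even closed walk at all.

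The paper sidesteps both issues by inducting only on the number $u$ of maximal excursions of $p$ into $E(G_1)\setminus E(H)$; bipartiteness of $G_1$ together with primitivity forces each such excursion to run from one endpoint of $H$ to the other (never back to the same endpoint). For the inductive step, write $p=(p_1,q)$ with $p_1$ the first such excursion, let $g_1$ be the binomial of the closed even walk $(p_1,h_1,\ldots,h_l)$ in $G_1$, and set $p'=(q,h_l,\ldots,h_1)$. With $O_1,E_1$ (resp.\ $O_2,E_2$) the odd/even edge-products of $p_1$ (resp.\ $q$) and $F_1,F_2$ those of $(h_1,\ldots,h_l)$, a direct computation gives, e.g.\ for $l$ even,
\[
  O_2\, g_1 + E_1\, f_{p'} = F_1\, f_p \qquad\text{and}\qquad E_2\, g_1 + O_1\, f_{p'} = F_2\, f_p,
\]
so whichever of $F_1,F_2$ equals $f$ already places $f\cdot f_p$ in $\langle g_1, f_{p'}\rangle$; the other $H$-factor cancels inside the identity. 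One power of $f$ suffices per step, and after $u$ steps the residual walk lies entirely in $G_2$. Your telescoping identity is the right ingredient, but the cancellation you gesture at only becomes visible once you restrict the induction to the $G_1$-excursions and write this identity down explicitly.
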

\begin{proof}
  The inclusion ``$\supseteq$'' follows by the fact that $I_{G_i}$
  is contained in $I_G$, and that $I_G$ is a prime ideal.

  For the reverse inclusion, recall from Theorem \ref{thm:univ-grb} that $I_G$ is generated by binomials corresponding to primitive closed even walks $p$ in $G$. Note that $p$ cannot contain a subpath in $G_1$ starting and ending at the same endpoint of $H$ (otherwise, as $G_1$ is bipartite, this subpath would be even, and thus $p$ is a concatenation of closed even walks contradicting the fact that $p$ was chosen to be primitive). Let us label the edges in $H$ by $h_1, \ldots, h_l$ and the remaining edges in $G_2$ by $h_{l+1}, \ldots, h_n$. Furthermore, label the edges in $G_1$ which are not contained in $H$ by $e_1, \ldots, e_m$. Using this notation, we can write a primitive closed even walk $p$ as follows
  \begin{multline}
    \label{eq:subpaths_G1}
    p = (\underbrace{e_{i_{11}}, \ldots, e_{i_{1r_1}}}_{\coloneqq p_1}, h_{j_{11}}, \ldots, h_{j_{1s_1}}, \underbrace{e_{i_{21}}, \ldots, e_{i_{2r_2}}}_{\coloneqq p_2}, h_{j_{21}}, \ldots, h_{j_{2s_2}}, \ldots,\\
    \underbrace{e_{i_{u1}}, \ldots, e_{i_{ur_u}}}_{\coloneqq p_u}, h_{j_{u1}}, \ldots, h_{j_{us_u}}) \text{.}
  \end{multline}
  We obtain subpaths $p_1, p_2, \ldots, p_u$ that contain edges
  of $E(G_1)\setminus E(G_2)$ that begin at one of the endpoints of $H$, and
  end at the other endpoint.
  
  We conclude the proof by showing that a path $p$ in $G$ that accepts a
  representation as in Equation \ref{eq:subpaths_G1} yields a binomial $f_p$
  contained in $(I_{G_1} + I_{G_2}) : f^\infty$. The proof is done by induction
  on the number of subpaths $p_1, p_2, \ldots, p_u$. If there are no such
  paths, then $p$ is contained entirely in $G_1$ or $G_2$, and the
  corresponding binomial belongs to the respective binomial ideal.

  If there is at least one such path $p_1$, we proceed as follows.
  To simplify notation, we write $p_1 = ( e_1, \ldots, e_r)$
  (here $r = r_1$) and $p = (e_1, \dots, e_{2m})$, where $e_{r+1},\ldots,
  e_{2m}$ is an edge in either $G_1$ or $G_2$ ($p$ contains an even
  number of edges since it is a primitive even walk).
  Furthermore, we denote the edges of the path graph
  $H$ by $(h_1, \ldots, h_l)$ (ordered such that they form a path starting at the endpoint of $p_1$). Our goal is to decompose the binomial $f_{p}$ into a linear combination of binomials $g_1$ and $f_{p'}$ corresponding to the closed even walks $(e_1, \dots, e_r, h_1, \dots, h_l)$ and $p' \coloneqq
  (e_{r+1}, \dots, e_{2n}, h_l,\dots, h_1)$ respectively. We define
  \begin{align*}
    E_1 &=\prod_{1\le k \le r, 2\mid k}e_k
    &O_{1}&=\prod_{1\le k\le r, 2\nmid k}e_k
    &F_1&=\prod_{2 \nmid k}h_{k}\\
    E_{2}&=\prod_{r+1\le k \le 2m, 2 \mid k}e_{p_{k}}
    &O_{2}&=\prod_{r+1\le k\le 2m, 2\nmid k}e_k
    &F_2&=\prod_{2\mid k}h_{k}.
  \end{align*}
  This allows us to write $f_p = O_1O_2 - E_1E_2$.

  If $l$ is even, we have $g_1 = O_1 F_1 - E_1 F_2 \in I_{G_{1}}$ and $f_{p'} = O_2 F_2 - E_2 F_1$.
  Note that since $l$ is even, then either $f = F_1$ or $f=F_2$ (i.e.,
  the edges with even indices in $(h_1,\ldots,h_l)$ will either
  be $\{h_2,h_4,\ldots,h_l\}$ or $\{h_1,\ldots,h_{l-1}\}$).
  If $f = F_1$, then
  \[
    O_2 \cdot g_1 + E_1 \cdot f_{p'} = F_1 \cdot f_p \text{.}
  \]
  On the other hand, if $f = F_2$, then
  \[
  E_2 \cdot g_1 + O_1 \cdot f_{p'} = F_2 \cdot f_p\text{.}
  \]
    
  If $l$ is odd, we have $g_1 = O_1F_2 - E_1F_1 \in I_{G_{1}}$,
  $f_{p'} = E_2F_2 - O_2F_1$, and $f = F_2$ (since the only edges with
  even indices in $(h_1,\ldots,h_l)$ are $\{h_2,\ldots,h_{l-1}\}$).
  Furthermore, we have
  \[
    O_2 \cdot g_1 - E_2 \cdot f_{p'} = F_2 \cdot f_p \text{.}
  \]
  Note that $p'$ in both the odd and even case is a path in $G$
  accepting a representation as in Equation \ref{eq:subpaths_G1}
  with exactly one less subpath $p'_1, \ldots p'_{u-1}$. Hence,
  the statement follows by the induction hypothesis.
\end{proof}

\begin{remark} By exploiting the characterization
  of primitive even closed walks (see \cite[Lemma 5.11]{Binomi}), one
  can replace the saturation of $f$ in
  Theorem \ref{thm:glue-Gs-along-path} with the second power,
  that is, $I_G = (I_{G_1}+I_{G_2}):f^2$.  For the purposes of this paper
  we only require the saturation, so we have elected not to present the
  more technical proof.
\end{remark}

\begin{example}
 Theorem \ref{thm:glue-Gs-along-path} is false if we drop the assumption that at least one graph is bipartite.
  Clearly the toric ideal of a triangle is the zero ideal. Suppose we glue two triangles along an edge (see Figure \ref{fig:not-bipart}). Then the toric ideal of the resulting graph will be nontrivial since there is now a four cycle, introducing a nonzero generator.
  \begin{figure}[!ht]
    \centering
    \begin{tikzpicture}[scale=.36]
      \coordinate (v1) at (0,0);
      \coordinate (v2) at (3,3);
      \coordinate (v3) at (0,3);

      \coordinate (v4) at (1,0);
      \coordinate (v5) at (4,0);
      \coordinate (v6) at (4,3);

      \coordinate (v7) at (7,0);
      \coordinate (v8) at (10,0);
      \coordinate (v9) at (10,3);
      \coordinate (v10) at (7,3);

      \draw[very thick] (v1)--(v2);
      \draw (v2)--(v3);
      \draw (v3)--(v1);

      \draw (v4)--(v5);
      \draw (v5)--(v6);
      \draw[very thick] (v6)--(v4);

      \draw (v7)--(v8);
      \draw (v8)--(v9);
      \draw (v9)--(v10);
      \draw (v7)--(v10);
      \draw[very thick] (v7)--(v9);

      \draw[thick,-latex] (4.5,1.5) -- (6.5,1.5);

      \fill[fill=white,draw=black] (v1) circle (.1);
      \fill[fill=white,draw=black] (v2) circle (.1);
      \fill[fill=white,draw=black] (v3) circle (.1);
      \fill[fill=white,draw=black] (v4) circle (.1);
      \fill[fill=white,draw=black] (v5) circle (.1);
      \fill[fill=white,draw=black] (v6) circle (.1);
      \fill[fill=white,draw=black] (v7) circle (.1);
      \fill[fill=white,draw=black] (v8) circle (.1);
      \fill[fill=white,draw=black] (v9) circle (.1);
      \fill[fill=white,draw=black] (v10) circle (.1);
    \end{tikzpicture}
    \caption{Theorem \ref{thm:glue-Gs-along-path} is false if none of the graphs are bipartite.}
    \label{fig:not-bipart}
  \end{figure}
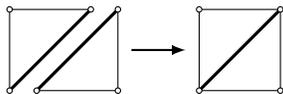
When we glue together non-bipartite graphs, we may introduced new primitive even walks, i.e., generators.
\end{example}

If the path in Theorem \ref{thm:glue-Gs-along-path} has length one,
i.e, it is an edge, we get a splitting of $I_G$.

\begin{corollary}\label{t. main splitting}
  Let $G$ be a graph, and suppose that $G_1$ and $G_2$ form a splitting of $G$ along an edge $e$. If $G_1$ is bipartite, then $I_G=I_{G_1}+I_{G_2}$.
\end{corollary}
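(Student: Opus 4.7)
The plan is to deduce this corollary as a direct specialization of Theorem \ref{thm:glue-Gs-along-path}, taking $H$ to be a single edge, i.e., the path graph $P_1$. First I would check that the hypotheses of Theorem \ref{thm:glue-Gs-along-path} are all satisfied in this setting. The edge $e$ along which $G$ splits is isomorphic to $P_1$, and the technical requirement that every vertex of $H$ distinct from the two endpoints has degree $2$ in $G$ is vacuous, since $P_1$ has no interior vertices (its only two vertices are both endpoints). The bipartite hypothesis on $G_1$ is assumed, so the theorem applies and gives
\[
  I_G = (I_{G_1} + I_{G_2}) : f^{\infty},
\]
where $f$ is the square-free monomial corresponding to the edges of $H$ with even indices.

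Next I would observe that for $H = P_1$ there is only one edge $h_1$, so the index set of its edges is $\{1\}$, which contains no even indices. Consequently $f$ is an empty product, i.e., $f = 1$, and the saturation is trivial:
\[
  I_G = (I_{G_1} + I_{G_2}) : 1^{\infty} = I_{G_1} + I_{G_2}.
\]
This yields the claimed splitting.

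There is no real obstacle here beyond unwinding the definition of $f$ in the length-one case; the substantive work has already been done in Theorem \ref{thm:glue-Gs-along-path}. If a pedagogical touch is desired, I would briefly note that for a single edge the inductive decomposition of primitive even walks in the proof of Theorem \ref{thm:glue-Gs-along-path} produces, at each step, binomial identities that require no multiplication by a monomial from $H$, which is exactly why no saturation is needed.
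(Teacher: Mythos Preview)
Your proposal is correct and matches the paper's intended approach: the paper states this as an immediate corollary of Theorem \ref{thm:glue-Gs-along-path} with no separate proof, and your observation that for $H \cong P_1$ the monomial $f$ is the empty product (hence $f=1$, making the saturation trivial) is exactly the specialization needed.
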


\begin{remark}
If we view $G$ as the glued graph of
$G_1$ and $G_2$, then note that 
Corollary \ref{t. main splitting} 
does not depend upon
the orientation of the gluing, i.e.,
it is independent of the graph
isomorphism $\varphi$.  However,
this fact requires that $G_1$ is bipartite.  If we glue two non-bipartite graphs
along an edge, then, as noted 
in Example \ref{e.gluing}, the 
resulting  graphs are non-isomorphic.  
In fact,
the toric ideals of the resulting graphs
may not be equal.  For example, the toric
ideals of the two graphs in Figure
\ref{f. gluing edges2} will have
non-equal toric ideals.
\end{remark}

Note that Corollary \ref{t. main splitting} is false if we split a graph along a path of length $>1$:

\begin{example}\label{e. generaliz 1 does not work}
  Let $G$ be the graph in Figure \ref{f. glued graph on an even path}. Note that the two subsets $W_1 = \{ x_1,x_2,x_3,x_4\}$ and $W_2 = \{ x_1, x_2, x_3,y_4 \}$ of $V(G)$ yield two induced subgraphs $G_1, G_2$ which intersect along the diagonal $H \cong P_2$ which form a splitting of $G$.
  \begin{figure}[!ht]
    \centering
    \begin{tikzpicture}[scale=0.5]
      \coordinate (v1) at (0,0);
      \coordinate (v2) at (3,0);
      \coordinate (v3) at (3,3);
      \coordinate (v4) at (0,3);
      \coordinate (v5) at (1.5,1.5);

      \draw (v1)--(v2) node[midway,below]{$\scriptstyle e_5$};
      \draw (v2)--(v3) node[midway,right]{$\scriptstyle e_6$};
      \draw (v3)--(v4) node[midway,above]{$\scriptstyle e_3$};
      \draw (v4)--(v1) node[midway,left]{$\scriptstyle e_4$};
      \draw (v1)--(v5) node[midway,right]{$\scriptstyle e_1$};
      \draw (v3)--(v5) node[midway,left]{$\scriptstyle e_2$};
		
      \fill[fill=white,draw=black] (v1) circle (0.1) node[below left]{$x_1$};
      \fill[fill=white,draw=black] (v2) circle (0.1) node[below right]{$y_4$};
      \fill[fill=white,draw=black] (v3) circle (0.1) node[above right]{$x_3$};
      \fill[fill=white,draw=black] (v4) circle (0.1) node[above left] {$x_4$};
      \fill[fill=white,draw=black] (v5) circle (0.1) node[right=1mm]{$x_2$};
    \end{tikzpicture}
    \caption{The graph from Example \ref{e. generaliz 1 does not work}.}
    \label{f. glued graph on an even path}
  \end{figure}
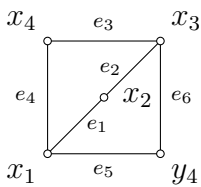	
  Since $G_i$ is isomorphic to a four cycle, its toric ideal $I_{G_i}$ is generated by a single generator, i.e., $I_{G_1} = \langle e_1e_3 - e_2e_4 \rangle$ and $I_{G_2} = \langle e_2e_5-e_1e_6 \rangle$. However, the toric ideal $I_G$ of $G$ has three generators corresponding to three primitive even closed walks of length four, namely $I_G = \langle e_1e_3 - e_2e_4, e_2e_5 - e_1e_6, e_3e_5 - e_4e_6 \rangle$.  Hence $I_G \neq I_{G_1} +I_{G_2}$.
  However, $e_2(e_3e_5 - e_4e_6) = e_6(e_1e_3-e_2e_4)+e_3(e_2e_5-e_1e_6)$,
  so $e_3e_5 - e_4e_6 \in  (I_{G_1}+I_{G_2}):e_2^\infty$
  \end{example}

The toric splittings of $I_G$ in Corollary \ref{t. main splitting}
has consequences for the Betti numbers of $I_G$.

\begin{theorem}\label{t.tensor product}
  Let $G = (V(G), E(G))$ be a graph. Suppose that $G_1=(V(G_{1}),E(G_{1})), G_2=(V(G_{2}),E(G_{2}))$ are two induced subgraphs which form a splitting of $G$ along an edge $e$. If $G_1$ is bipartite, then
  \[
    \beta_{i,j}(\K[E(G)]/I_G)=\sum_{\substack{i_1+i_2 = i\\j_1+j_2 = j}} {\beta_{i_1,j_1}(\K[E(G_{1})]/I_{G_1})\beta_{i_2,j_2}( \K[E(G_{2})]/I_{G_2})} ~~\mbox{for all $i,j \geq 0$.}
  \]
\end{theorem}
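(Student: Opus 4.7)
By Corollary \ref{t. main splitting}, since $G_1$ is bipartite, we have $I_G = I_{G_1} + I_{G_2}$ as ideals of $R = \K[E(G)]$. Write $R_i = \K[E(G_i)]$ for $i=1,2$, and abuse notation by letting $e$ denote both the common edge and the corresponding indeterminate. Since $G_1$ and $G_2$ form a splitting along $e$, we have $E(G_1) \cap E(G_2) = \{e\}$, hence $R = R_1 \otimes_{\K[e]} R_2$. My strategy is to produce a minimal graded free $R$-resolution of $R/I_G$ as the total complex of $F_\bullet \otimes_{\K[e]} G_\bullet$, where $F_\bullet$ and $G_\bullet$ are the minimal graded free resolutions of $R_1/I_{G_1}$ over $R_1$ and of $R_2/I_{G_2}$ over $R_2$, respectively.

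The key preparatory step is to show that $R_i/I_{G_i}$ is flat as a $\K[e]$-module. Since $I_{G_i}$ is a toric ideal, it is prime and all of its minimal generators have degree at least $2$, so $e \notin I_{G_i}$. Thus the image of $e$ in the domain $R_i/I_{G_i}$ is nonzero, making $R_i/I_{G_i}$ torsion-free, and hence flat, over the PID $\K[e]$. In particular, each $F_i$ is free (hence flat) over $\K[e]$, being a free module over the $\K[e]$-free ring $R_1$; similarly each $G_j$ is $\K[e]$-free. The identity $R_1^a \otimes_{\K[e]} R_2^b \cong R^{ab}$ then shows that each $F_i \otimes_{\K[e]} G_j$ is a free $R$-module.

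The second step is to invoke the K\"unneth formula for tensor products of complexes over the PID $\K[e]$. Because $R_1/I_{G_1}$ and $R_2/I_{G_2}$ are $\K[e]$-flat, the $\text{Tor}^{\K[e]}_1$-terms in K\"unneth vanish, so the total complex $\mathcal{C}_\bullet \coloneqq \text{Tot}(F_\bullet \otimes_{\K[e]} G_\bullet)$ has homology concentrated in degree zero, where it equals $R_1/I_{G_1} \otimes_{\K[e]} R_2/I_{G_2} = R/(I_{G_1}+I_{G_2}) = R/I_G$. Thus $\mathcal{C}_\bullet$ is a graded free $R$-resolution of $R/I_G$. Minimality follows because the differentials of the total complex are built from those of $F_\bullet$ and $G_\bullet$, whose matrix entries lie in the irrelevant ideals of $R_1$ and $R_2$, and hence in the irrelevant ideal of $R$. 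Reading off the graded ranks of $\mathcal{C}_\bullet$ then yields the convolution formula for $\beta_{i,j}(R/I_G)$. The main obstacle I anticipate is the relative flatness reduction; once that is in place, the rest is a straightforward application of the K\"unneth formula together with careful bookkeeping of the grading.
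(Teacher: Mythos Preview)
Your argument is correct and takes a genuinely different route from the paper's proof. The paper first \emph{separates} the shared edge: it passes to the disjoint union $G' = G_1' \sqcup G_2'$ (so that $\K[E(G')]/I_{G'} \cong \K[E(G_1)]/I_{G_1} \otimes_\K \K[E(G_2)]/I_{G_2}$ is a tensor product over the field $\K$), observes that the linear form $e_1' - e_2'$ is regular on this domain, and then invokes the standard fact that quotienting by a regular linear form preserves graded Betti numbers. You instead keep the edge $e$ shared and tensor over the PID $\K[e]$, replacing the ``regular linear form'' step by the equivalent statement that $R_i/I_{G_i}$ is torsion-free (hence flat) over $\K[e]$, and then appeal to the K\"unneth theorem over a PID to see that $\mathrm{Tot}(F_\bullet \otimes_{\K[e]} G_\bullet)$ is acyclic in positive degrees. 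Both arguments ultimately rest on the same fact---that the variable $e$ is a nonzerodivisor on the edge subrings because toric ideals of simple graphs contain no linear forms---but they package it differently. The paper's approach is slightly more elementary (tensor products over a field and a one-line citation for the regular-element lemma), while yours is more direct in that it never leaves the ring $R = \K[E(G)]$ and never introduces auxiliary variables; it also makes transparent why the resolution is minimal, since the total-complex differential is visibly built from entries in $\mathfrak{m}_{R_1}$ and $\mathfrak{m}_{R_2}$. One small wording point: your line ``all of its minimal generators have degree at least $2$, so $e \notin I_{G_i}$'' should really say that $I_{G_i}$ is homogeneous with no degree-$1$ part (or simply that a toric ideal contains no monomials), but this is cosmetic.
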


\begin{proof}
  Splitting and gluing of graphs are inverse operations. To be more precise, $G$ can be obtained from the disjoint union of $G_1$ and $G_2$ and then
  identifying the corresponding edge in $G_1$, respectively $G_2$.
  We translate this graph theoretical construction into algebra.

  Let $G'_i = (V(G_{i}'), E(G_{i}'))$ be isomorphic to $G_i$ where we assume that $V(G_{1}') \cap V(G_{2}') = \emptyset$. We define the graph   $G'=(V(G')=V(G_{1}')\sqcup V(G_{2}'),E(G')=E(G_{1}')\sqcup E(G_{2}'))$. Let $e'_i \in E(G_{i}')$ be the edges along which we glue. Algebraically, the process of gluing $e'_1$ along $e'_2$ corresponds to taking the quotient by the principal ideal $(e'_1-e'_2) \subseteq \K[E(G')]$. By Corollary \ref{t. main splitting}, $I_G$ corresponds under the isomorphism $\K[E(G)] \cong \K[E(G')]/(e'_1-e'_2)$ to $(I_{G'} + (e'_1-e'_2))/(e'_1-e'_2)$, so that we obtain:
  \[
  \K[E(G)]/I_G \cong \frac{\K[E(G')]/(e'_1-e'_2)}{(I_{G'} + (e'_1-e'_2))/(e'_1-e'_2)} \cong \frac{\K[E(G')]}{I_{G'} + (e'_1-e'_2)} \text{.}
  \]
  The toric ideal of $G'$ is a prime ideal, so $\K[E(G')]/I_{G'}$ is a domain. Therefore ${e'_1-e'_2}$ gives rise to a regular form in $\K[E(G')]/I_{G'}$. As a consequence of \cite[Corollary 20.4]{PeevaBook}, both $\K[E(G')]/I_{G'}$ and $\K[E(G')]/(I_{G'}+(e'_1-e'_2))$ share the same graded Betti numbers. But $I_{G'} = I_{G'_1} + I_{G'_2} \subseteq \K[E(G')]$
  where each ideal is 
  in a different set of variables. So then
  \[
    \K[E(G')]/I_{G'} = \K[E(G_{1}')]/I_{G'_1} \otimes_\K \K[E(G'_{2})]/I_{G'_2} \cong \K[E(G_{1})]/I_{G_1} \otimes_\K \K[E(G_{2})]/I_{G_2} \text{.}
  \]
  By taking the tensor product of the resolutions
  of $\K[E(G_{1})]/I_{G_1}$ and $\K[E(G_{2})]/I_{G_2}$, we have
  \begin{eqnarray*}
    \beta_{i,j}(\K[E(G)]/I_G) &= &\beta_{i,j}(\K[E(G')]/(I_{G'}+(e'_1-e'_2)) \\
    & = & \beta_{i,j}(\K[E(G')]/I_{G'}) = 
    \sum_{\substack{i_1+i_2 = i\\j_1+j_2 = j}} {\beta_{i_1,j_1}(\K[E(G_{1})]/I_{G_1})\beta_{i_2,j_2}( \K[E(G_{2})]/I_{G_2})},
  \end{eqnarray*}
  as desired.
\end{proof}

\begin{remark}
Given graphs $G_1,\ldots,G_n$, where
at most one graph is not bipartite,
one can first glue $G_1$ and $G_2$
along an edge to form
$G_{1,2}$, then glue $G_3$ along an
edge of $G_{1,2}$ to form $G_{1,2,3}$,
and so on, to form a new graph
$G_{1,2,\ldots,n}$. By iteratively applying the results in this section,
we can compute
the graded Betti numbers of this
new graph.  
Theorem \ref{t. Splitting 1} can be seen as a special case of what has just been remarked, where the first graph
is an even cycle $C$, and then 
we glue the remaining graphs along
edges of $C$ (see Figure \ref{fig:combined}). 
\end{remark}

\begin{example}\label{e. of tensor product betti}
 To illustrate some of the ideas of this
 section, consider the three graphs in Figure \ref{f. 4 cycles glued at an edge}. 
  \begin{figure}[!ht]
    \centering
    \begin{tikzpicture}[scale=.4]
      \node (f1) at (-1.5,2) {$G=\ $};

      \coordinate (v1) at (0,0);
      \coordinate (v2) at (3,0);
      \coordinate (v3) at (3,3);
      \coordinate (v4) at (0,3);
      \coordinate (v5) at (4.5,1);
      \coordinate (v6) at (4.5,4);
      \coordinate (v7) at (6,0);
      \coordinate (v8) at (6,3);
      \coordinate (v9) at (1.5,-1);
      \coordinate (v10) at (1.5,2);

      \draw (v1)--(v2);
      \draw (v2)--(v3);
      \draw (v3)--(v4);
      \draw (v4)--(v1);
      \draw (v2)--(v5);
      \draw (v3)--(v6);
      \draw (v5)--(v6);
      \draw (v2)--(v7);
      \draw (v3)--(v8);
      \draw (v7)--(v8);
      \draw (v2)--(v9);
      \draw (v3)--(v10);
      \draw (v9)--(v10);

      \fill[fill=white,draw=black] (v1) circle (0.1);
      \fill[fill=white,draw=black] (v2) circle (0.1);
      \fill[fill=white,draw=black] (v3) circle (0.1);
      \fill[fill=white,draw=black] (v4) circle (0.1);
      \fill[fill=white,draw=black] (v5) circle (0.1);
      \fill[fill=white,draw=black] (v6) circle (0.1);
      \fill[fill=white,draw=black] (v7) circle (0.1);
      \fill[fill=white,draw=black] (v8) circle (0.1);
      \fill[fill=white,draw=black] (v9) circle (0.1);
      \fill[fill=white,draw=black] (v10) circle (0.1);
    \end{tikzpicture}
    \begin{tikzpicture}[scale=0.4]
      \node (f1) at (-1.5,1) {$G'=\ $};

      \coordinate (v1) at (0,0);
      \coordinate (v2) at (2,0);
      \coordinate (v3) at (2,2);
      \coordinate (v4) at (0,2);
      \coordinate (v5) at (4,0);
      \coordinate (v6) at (4,2);
      \coordinate (v7) at (4,4);
      \coordinate (v8) at (2,4);
      \coordinate (v9) at (0,-2);
      \coordinate (v10) at (2,-2);

      \draw (v1)--(v2);
      \draw (v2)--(v3);
      \draw (v3)--(v4);
      \draw (v4)--(v1);
      \draw (v2)--(v5);
      \draw (v3)--(v6);
      \draw (v5)--(v6);
      \draw (v6)--(v7);
      \draw (v3)--(v8);
      \draw (v7)--(v8);
      \draw (v1)--(v9);
      \draw (v3)--(v10);
      \draw (v9)--(v10);

      \fill[fill=white,draw=black] (v1) circle (0.1);
      \fill[fill=white,draw=black] (v2) circle (0.1);
      \fill[fill=white,draw=black] (v3) circle (0.1);
      \fill[fill=white,draw=black] (v4) circle (0.1);
      \fill[fill=white,draw=black] (v5) circle (0.1);
      \fill[fill=white,draw=black] (v6) circle (0.1);
      \fill[fill=white,draw=black] (v7) circle (0.1);
      \fill[fill=white,draw=black] (v8) circle (0.1);
      \fill[fill=white,draw=black] (v9) circle (0.1);
      \fill[fill=white,draw=black] (v10) circle (0.1);
    \end{tikzpicture}
    \begin{tikzpicture}[scale=0.4]
      \node (f1) at (-1.5,3) {$G'' =\ $};

      \coordinate (v1) at (0,0);
      \coordinate (v2) at (2,0);
      \coordinate (v3) at (2,2);
      \coordinate (v4) at (0,2);
      \coordinate (v5) at (4,0);
      \coordinate (v6) at (4,2);
      \coordinate (v7) at (4,4);
      \coordinate (v8) at (2,4);
      \coordinate (v9) at (0,4);

      \draw (v1)--(v2);
      \draw (v2)--(v3);
      \draw (v3)--(v4);
      \draw (v4)--(v1);
      \draw (v2)--(v5);
      \draw (v3)--(v6);
      \draw (v5)--(v6);
      \draw (v6)--(v7);
      \draw (v3)--(v8);
      \draw (v7)--(v8);
      \draw (v4)--(v9);
      \draw (v9)--(v8);

      \fill[fill=white,draw=black] (v1) circle (0.1);
      \fill[fill=white,draw=black] (v2) circle (0.1);
      \fill[fill=white,draw=black] (v3) circle (0.1);
      \fill[fill=white,draw=black] (v4) circle (0.1);
      \fill[fill=white,draw=black] (v5) circle (0.1);
      \fill[fill=white,draw=black] (v6) circle (0.1);
      \fill[fill=white,draw=black] (v7) circle (0.1);
      \fill[fill=white,draw=black] (v8) circle (0.1);
      \fill[fill=white,draw=black] (v9) circle (0.1);
    \end{tikzpicture}
    \caption{$G$ and $G'$ are a gluing of four $4$-cycles at an edge, $G''$ is not.}
    \label{f. 4 cycles glued at an edge} 
  \end{figure}
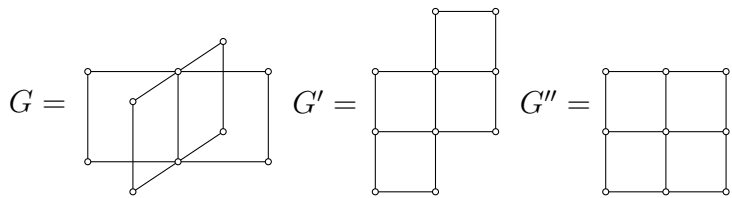
The graphs $G$ and $G'$ are obtaining by gluing in two different ways four copies of the four cycle $C_4$ along one edge. 
Note that it is not possible to construct $G''$ by iteratively gluing four four cycles along one edge at each step.
    From Theorem \ref{t.tensor product}, the ideals $I_G$ and $I_{G'}$ have the same graded Betti numbers, $\beta_{i,j}\coloneqq \beta_{i,j}(\K[E(G)]/I_G)=\beta_{i,j}(\K[E(G')]/I_{G'})$, see the following Betti table. One can check, using for instance \emph{Macaulay2}, that $I_{G''}$ has graded Betti numbers  $\beta_{i,j}''\coloneqq \beta_{i,j}(\K[E(G'')]/I_{G''})$ that are different from $\beta_{i,j}$ as seen in the second
    Betti table.
  \begin{center}{\tiny
    $\beta_{i,j} \coloneqq$ \begin{tabular}{cccccc}
      &0&1&2&3&4\\
      total:&1&4&6&4&1\\
      0: & 1 & . & . & . & .\\
      1: & . & 4 & . & . & . \\
      2: & . &.& 6 & . & .\\
      3: & . &.&.& 4 & .\\
      4: & . & . & . & . & 1
    \end{tabular}\ \ \
    $\beta''_{i,j}\coloneqq$
    \begin{tabular}{cccccc}
      & 0 & 1 & 2 & 3 & 4\\
      total: & 1 & 5 & 10 & 10 & 4\\
      0: & 1 & . & . & . & .\\
      1: & . & 4 & . & . & .\\
      2: & . & . & 6 & . & .\\
      3: & . & 1 & 4 & 10 & 4\\
    \end{tabular}
}
  \end{center}
\end{example}


\section{Future directions}
\label{sec:further-dir}

Theorem \ref{t. Splitting 1} and Corollary \ref{t. main splitting} describe
two ways in which the toric ideal of a graph can be split.  It is
natural to ask the following (but possibly difficult) question.

\begin{question}\label{q. other splitting}
  For what graphs $G$ can we find graphs $G_1$ and $G_2$ so that their
  respective toric ideals satisfy $I_G = I_{G_1} + I_{G_2}$?  More generally,
  can we classify when $I_G$ is a splittable toric ideal in terms
  of $G$?
\end{question}

In Theorem \ref{t. Splitting 1} and Corollary \ref{t. main splitting},
our graphs are glued along a single edge.  An edge can also be viewed
as a complete graph.  A \emph{complete graph} on $n$
vertices, denoted $K_n$,  is the graph where each vertex is adjacent to
every other vertex.  Since an edge is a $K_2$, it
is natural to ask if our main results can be generalized if we glue
along a subgraph that is a complete graph.

As an example of this behaviour, consider the two graphs $G_1$ and $G_2$
that are glued along the triangle (which is a $K_3$) to create the graph
$G$ as in Figure \ref{f. firstgluing_a}.  We have highlighted the glued
edges in $G$ by making the corresponding
edges thicker.
\begin{figure}[!ht]
    \centering
    \begin{tikzpicture}[scale=.5]
      \coordinate (y1) at (0,0);
      \coordinate (y2) at (2,0);
      \coordinate (y3) at (1,-1);
      \coordinate (y4) at (.5,1);
      \coordinate (y5) at (1.5,1);
      \node (f1) at (1.2,-2) {$G_1$};
      \draw (y1) -- (y2);
      \draw (y1) -- (y4) -- (y5) -- (y2) -- (y3) -- (y1);

      \coordinate (z1) at (4,0);
      \coordinate (z2) at (6,0);
      \coordinate (z3) at (8,0);
      \coordinate (z4) at (10,0);
      \coordinate (z5) at (11,1);
      \coordinate (z6) at (11,-1);
      \coordinate (z7) at (5,-1);
       \node (f2) at (8.2,-2) {$G_2$};

      \draw (z1) -- (z2) -- (z4) -- (z3) -- (z4) -- (z5) -- (z6) -- (z4);
      \draw (z1) -- (z7) -- (z2);
      
      \coordinate (x1) at (13,0);
      \coordinate (x2) at (15,0);
      \coordinate (x3) at (17,0);
      \coordinate (x4) at (19,0);
      \coordinate (x5) at (20,1);
      \coordinate (x6) at (20,-1);
      \coordinate (x7) at (14,-1);
      \coordinate (x8) at (13.5,1);
      \coordinate (x9) at (14.5,1);
      \node (f3) at (17.2,-2) {$G$};

      \draw[very thick] (x1) -- (x2);
      \draw (x2) -- (x3);
      \draw (x3) -- (x4);
      \draw (x4) -- (x5);
      \draw (x5) -- (x6);
      \draw (x6) -- (x4);
      \draw[very thick] (x1) -- (x7);
      \draw[very thick] (x7) -- (x2);
      \draw (x1) -- (x8);
      \draw (x8) -- (x9);
      \draw (x9) -- (x2);

      \fill[fill=white,draw=black] (x1) circle (.1);
      \fill[fill=white,draw=black] (x2) circle (.1);
      \fill[fill=white,draw=black] (x3) circle (.1);
      \fill[fill=white,draw=black] (x4) circle (.1);
      \fill[fill=white,draw=black] (x5) circle (.1);
      \fill[fill=white,draw=black] (x6) circle (.1);
      \fill[fill=white,draw=black] (x7) circle (.1);
      \fill[fill=white,draw=black] (x8) circle (.1);
      \fill[fill=white,draw=black] (x9) circle (.1);

      \fill[fill=white,draw=black] (y1) circle (.1);
      \fill[fill=white,draw=black] (y2) circle (.1);
      \fill[fill=white,draw=black] (y3) circle (.1);
      \fill[fill=white,draw=black] (y4) circle (.1);
      \fill[fill=white,draw=black] (y5) circle (.1);
      
      \fill[fill=white,draw=black] (z1) circle (.1);
      \fill[fill=white,draw=black] (z2) circle (.1);
      \fill[fill=white,draw=black] (z3) circle (.1);
      \fill[fill=white,draw=black] (z4) circle (.1);
      \fill[fill=white,draw=black] (z5) circle (.1);
      \fill[fill=white,draw=black] (z6) circle (.1);
      \fill[fill=white,draw=black] (z7) circle (.1);
    \end{tikzpicture}
    \caption{The graph $G$ obtained by gluing $G_1$ and $G_2$ along a $K_3$}
    \label{f. firstgluing_a}
\end{figure}
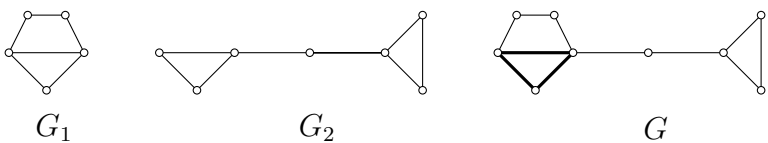
\noindent
Using a computer algebra system, one can verify
that the toric ideal of $I_G$ is splittable, and in fact,
$I_G = I_{G_1} + I_{G_2}.$

The graph $G$ actually highlights a subtlety of Question
\ref{q. other splitting} since the splitting of $I_G$ also follows
from our results.
In particular, observe that $G$ can also be constructed by
gluing the graphs $G_1'$ and $G_2$ along a single edge as in
Figure \ref{f. secondgluing}.
\begin{figure}[!ht]
    \centering
    \begin{tikzpicture}[scale=.5]
      \coordinate (y1) at (0,0);
      \coordinate (y2) at (2,0);
      \coordinate (y4) at (.5,1);
      \coordinate (y5) at (1.5,1);
      \node (f1) at (1.2,-2) {$G'_1$};
      \draw (y1) -- (y2);
      \draw (y1) -- (y4) -- (y5) -- (y2) -- (y1);

      \coordinate (z1) at (4,0);
      \coordinate (z2) at (6,0);
      \coordinate (z3) at (8,0);
      \coordinate (z4) at (10,0);
      \coordinate (z5) at (11,1);
      \coordinate (z6) at (11,-1);
      \coordinate (z7) at (5,-1);
       \node (f2) at (8.2,-2) {$G_2$};

      \draw (z1) -- (z2) -- (z4) -- (z3) -- (z4) -- (z5) -- (z6) -- (z4);
      \draw (z1) -- (z7) -- (z2);
      
      \coordinate (x1) at (13,0);
      \coordinate (x2) at (15,0);
      \coordinate (x3) at (17,0);
      \coordinate (x4) at (19,0);
      \coordinate (x5) at (20,1);
      \coordinate (x6) at (20,-1);
      \coordinate (x7) at (14,-1);
      \coordinate (x8) at (13.5,1);
      \coordinate (x9) at (14.5,1);
      \node (f3) at (17.2,-2) {$G$};

      \draw[very thick] (x1) -- (x2);
      \draw (x2) -- (x3);
      \draw (x3) -- (x4);
      \draw (x4) -- (x5);
      \draw (x5) -- (x6);
      \draw (x6) -- (x4);
      \draw (x1) -- (x7);
      \draw (x7) -- (x2);
      \draw (x1) -- (x8);
      \draw (x8) -- (x9);
      \draw (x9) -- (x2);

      \fill[fill=white,draw=black] (x1) circle (.1);
      \fill[fill=white,draw=black] (x2) circle (.1);
      \fill[fill=white,draw=black] (x3) circle (.1);
      \fill[fill=white,draw=black] (x4) circle (.1);
      \fill[fill=white,draw=black] (x5) circle (.1);
      \fill[fill=white,draw=black] (x6) circle (.1);
      \fill[fill=white,draw=black] (x7) circle (.1);
      \fill[fill=white,draw=black] (x8) circle (.1);
      \fill[fill=white,draw=black] (x9) circle (.1);

      \fill[fill=white,draw=black] (y1) circle (.1);
      \fill[fill=white,draw=black] (y2) circle (.1);
      \fill[fill=white,draw=black] (y4) circle (.1);
      \fill[fill=white,draw=black] (y5) circle (.1);
      
      \fill[fill=white,draw=black] (z1) circle (.1);
      \fill[fill=white,draw=black] (z2) circle (.1);
      \fill[fill=white,draw=black] (z3) circle (.1);
      \fill[fill=white,draw=black] (z4) circle (.1);
      \fill[fill=white,draw=black] (z5) circle (.1);
      \fill[fill=white,draw=black] (z6) circle (.1);
      \fill[fill=white,draw=black] (z7) circle (.1);
    \end{tikzpicture}
    \caption{The graph $G$ obtained by gluing $G_1'$ and $G_2$ along a $K_2$}
    \label{f. secondgluing}
\end{figure}
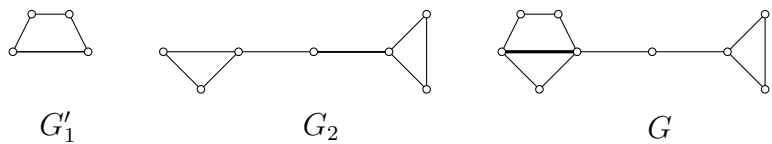
Since $G'_1$ is bipartite, Corollary \ref{t. main splitting} gives
$I_G = I_{G'_1} + I_{G_2}$.  Note that $I_{G'_1} = I_{G_1}$ since the
non-bipartite graph $G_1$ has only one generator coming from the four cycle.
Thus, the two splittings are the same.

Since we are interested in the Betti numbers
of $I_G$, we pose a follow up to Question
\ref{q. other splitting}.

\begin{question}\label{q. betti numbers}
  Suppose that there exists graphs $G, G_1$ and $G_2$ such that
  $I_G = I_{G_1} + I_{G_2}$.  How do the graded Betti numbers of
  $I_G$ related to those of $I_{G_1}$ and $I_{G_2}$?
  \end{question}

Understanding Questions
\ref{q. other splitting} and \ref{q. betti numbers} for arbitrary toric
ideals would also be of interest.

\end{document}